\newtheorem{lemma}{Lemma}
\newtheorem{theorem}{Theorem}
\newtheorem{corollary}{Corollary}
\newtheorem*{theorem*}{Theorem}
\newtheorem*{corollary*}{Corollary}
\begin{document}

\title{On the $H$-triangle of generalised nonnesting partitions}
\author{Marko Thiel}
\address{Department of Mathematics, University of Vienna, Oskar-Morgenstern-Platz 1, 1090 Vienna, Austria}

\begin{abstract} With a crystallographic root system $\Phi$ and a positive integer $k$, there are associated two Fu{\ss}-Catalan objects, the set of $k$-generalised nonnesting partitions $NN^{(k)}(\Phi)$, and the generalised cluster complex $\Delta^{(k)}(\Phi)$.
These possess a number of enumerative coincidences, many of which are captured in a surprising identity, first conjectured by Chapoton for $k=1$ and later generalised to $k>1$ by Armstrong. We prove this conjecture, obtaining some structural and enumerative results on $NN^{(k)}(\Phi)$ along the way, including an earlier conjecture by Fomin and Reading giving a refined enumeration by Fu{\ss}-Narayana numbers.
\end{abstract}

\maketitle
\section{Introduction}
For a crystallographic root system $\Phi$, there are three well-known Coxeter-Catalan objects \cite{armstrong09generalized}: the set of noncrossing partitions $NC(\Phi)$, the set of nonnesting partitions $NN(\Phi)$ and the cluster complex $\Delta(\Phi)$.
The former two and the set of facets of the latter are all counted by the same numbers, the Coxeter-Catalan numbers $Cat(\Phi)$. For the root system of type $A_{n-1}$, these reduce to objects counted by the classical Catalan numbers $C_n=\frac{1}{n+1}\binom{2n}{n}$, namely the set of noncrossing partitions of $[n]=\{1,2,\ldots,n\}$, the set of nonnesting partitions of $[n]$ and the set of triangulations of a convex $(n+2)$-gon, respectively.\\
\\
Each of these Coxeter-Catalan objects has a generalisation \cite{armstrong09generalized}, a Fu{\ss}-Catalan object defined for each positive integer $k$. These are the set of $k$-divisible noncrossing partitions $NC_{(k)}(\Phi)$, the set of $k$-generalised nonnesting partitions $NN^{(k)}(\Phi)$ and the generalised cluster complex $\Delta^{(k)}(\Phi)$. They specialise to the corresponding Coxeter-Catalan objects when $k=1$. The former two and the set of facets of the latter are counted by Fu{\ss}-Catalan numbers $Cat^{(k)}(\Phi)$, which specialise to the classical Fu{\ss}-Catalan numbers $C^{(k)}_n=\frac{1}{kn+1}\binom{(k+1)n}{n}$ in type~$A_{n-1}$.\\
\\
The enumerative coincidences do not end here. Chapoton defined the $M$-triangle, the $H$-triangle and the $F$-triangle, which are polynomials in two variables that encode refined enumerative information on $NC(\Phi)$, $NN(\Phi)$ and $\Delta(\Phi)$ respectively \cite{chapoton04enumerative,chapoton06sur}. This allowed him to formulate the $M=F$ conjecture \mbox{\cite[Conjecture 1]{chapoton04enumerative}} and the $H=F$ conjecture \cite[Conjecture 6.1]{chapoton06sur} relating these polynomials through invertible transformations of variables. These conjectures were later generalised to the corresponding Fu{\ss}-Catalan objects by Armstrong \cite[Conjecture 5.3.2.]{armstrong09generalized}.
The $M=F$ conjecture was first proven by Athanasiadis \cite{athanasiadis07associahedra} for $k=1$, and later by Krattenthaler \cite{krattenthaler06ftriangle,krattenthaler06mtriangle} and Tzanaki \cite{tzanaki08faces} for $k>1$. We establish the $H=F$ conjecture in the generalised form due to Armstrong, obtaining some structural and enumerative results on $NN^{(k)}(\Phi)$ along the way, including a refined enumeration by Fu{\ss}-Narayana numbers conjectured by Fomin and Reading \cite[Conjecture 10.1.]{fomin05generalized}.
\section{Definitions and the Main Result}
Let $\Phi=\Phi(S)$ be a crystallographic root system with a simple system $S$. Then $\Phi=\Phi^+\sqcup-\Phi^+$ is the disjoint union of the set of positive roots $\Phi^+$ and the set of negative roots $-\Phi^+$. Every positive root can be written uniquely as a linear combination of the simple roots and all coefficients of this linear combination are nonnegative integers. For further background on root systems, see \cite{humphreys90reflection}.
Define the \textit{root order} on $\Phi^+$ by 
$$\beta\geq\alpha\text{ if and only if }\beta-\alpha\in\langle S\rangle_{\mathbb{N}}\text{,}$$
that is, $\beta\geq\alpha$ if and only if $\beta-\alpha$ can be written as a linear combination of simple roots with nonnegative integer coefficients.
The set of positive roots $\Phi^+$ with this partial order is called the \textit{root poset}. An \textit{order filter} in this poset is a subset $I$ of $\Phi^+$ such that whenever $\alpha\in I$ and $\beta\geq\alpha$, then also $\beta\in I$. The \textit{support} of a root $\beta\in\Phi^+$ is the set of all $\alpha\in S$ with $\alpha\leq\beta$.
For $\alpha\in S$, define $I(\alpha)$ as the order filter generated by $\alpha$, that is the set of all roots $\beta\in\Phi^+$ that have $\alpha$ in their support.
We call a root system \textit{classical} if all its irreducible components are of type $A$, $B$, $C$ or $D$. Otherwise we call it \textit{exceptional}.\\
\\
Let $V=\langle\Phi\rangle=\langle S\rangle$ be the ambient vector space spanned by the roots. Define the \textit{$k$-Catalan arrangement} as the hyperplane arrangement in $V$ given by the hyperplanes $H_{\alpha,i}=\{x\in V\mid \langle x,\alpha\rangle=i\}$ for $\alpha\in\Phi$ and $i\in\{0,1,\ldots,k\}$.
The connected components of $V\backslash\bigcup_{\alpha,i}H_{\alpha,i}$ are called the \textit{regions} of the arrangement, and those regions $R$ with $\langle x,\alpha\rangle\geq0$ for all $x\in R$ and all $\alpha\in\Phi^+$ are called \textit{dominant}.
A supporting hyperplane of a dominant region $R$ is called a \textit{wall} of $R$, it is a \textit{separating wall} or \textit{floor} of $R$ if the origin and $R$ lie on different sides of the hyperplane.
It is a \textit{ceiling} if $R$ and the origin lie on the same side of the hyperplane. A hyperplane is called \textit{$i$-coloured} if it is of the form $H_{\alpha,i}$ for some $\alpha\in\Phi^+$.\\
\\
Now let $k$ be a positive integer and consider a descending (multi)chain of order filters: a tuple $I=(I_1,I_2,\ldots,I_k)$ with $I_1\supseteq I_2\supseteq\ldots\supseteq I_k$.
This chain is called \textit{geometric} if the following conditions hold:
$$(I_i+I_j)\cap\Phi^+\subseteq I_{i+j}\text{ for all }i,j\in\{0,1,\ldots,k\}\text{ and }$$
$$(J_i+J_j)\cap\Phi^+\subseteq J_{i+j}\text{ for all }i,j\in\{0,1,\ldots,k\}\text{ with }i+j\leq k\text{,}$$
where $I_0=\Phi^+$, $J_i=\Phi^+\backslash I_i$ for $i\in\{0,1,\ldots,k\}$, and $I_i=I_k$ for $i>k$. 
The set $NN^{(k)}(\Phi)$ of \textit{$k$-generalised nonnesting partitions} of $\Phi$ is the set of all geometric chains of $k$ order filters in the root poset of $\Phi$.\\
\\
For $I\in NN^{(k)}(\Phi)$, and $\alpha\in\Phi^+$, define $$k_{\alpha}(I)=\text{max}\{k_1+k_2+\ldots+k_r\mid\alpha=\alpha_1+\alpha_2+\ldots+\alpha_r\text{ and }\alpha_i\in I_{k_i}\text{ for all }i\}\text{.}$$ A positive root $\alpha$ is called a \textit{rank $l$ indecomposable element} \cite[Definition 3.8.]{athanasiadis05refinement} of a $k$-nonnesting partition $I$ if $\alpha\in I_l$, $k_{\alpha}(I)=l$, $\alpha\notin I_i+I_j$ for all $i,j\in\{0,1,\ldots,k\}$ with $i+j=l$, and for all $\beta\in\Phi^+$, if $\alpha+\beta\in I_t$ for some $t\leq m$ and $k_{\alpha+\beta}(I)=t$, then $\beta\in I_{t-l}$.
A rank $l$ indecomposable element $\alpha$ is called a \textit{rank $l$ simple element} of $I$ if $\alpha\in S$. The rank $k$ indecomposable elements of $I$ are precisely the elements of $I_k$ that are not in $I_i+I_j$ for all $i,j\in\{0,1,\ldots,k\}$ with $i+j=k$ \cite[Lemma 3.9.]{athanasiadis05refinement}. Notice that all $\alpha\in I_k\cap S$ are automatically rank $k$ indecomposable.\\
\\
Now we can define the \textit{$H$-triangle} \cite[Definition 5.3.1.]{armstrong09generalized} as
$$H^k_{\Phi}(x,y)=\sum_{I\in NN^{(k)}(\Phi)}x^{|i(I)|}y^{|s(I)|}\text{,}$$
where $i(I)$ is the set of rank $k$ indecomposable elements of $I$ and $s(I)$ is the set of rank $k$ simple elements of $I$.\\
\\
There is a natural bijection $\phi$ \cite[Theorem 3.6.]{athanasiadis05refinement} from the set of dominant regions $R$ of the $k$-Catalan arrangement of $\Phi$ to $NN^{(k)}(\Phi)$, such that $H_{\alpha,i}$ is a floor of $R$ if and only if $\alpha$ is a rank $i$ indecomposable element of $\phi(R)$.\\

Let $\Phi^{(k)}_{\geq-1}$ be the set of \textit{k-coloured almost positive roots} of $\Phi$, containing one uncoloured copy of each negative simple root and $k$ copies of each positive root, each with a different colour from the colour set $\{1,2,\ldots,k\}$. Then there exists a symmetric binary relation called \textit{compatibility} \cite[Definition 3.1.]{fomin05generalized} on $\Phi^{(k)}_{\geq-1}$ such that all uncoloured negative simple roots are pairwise compatible and for $\alpha\in S$ an uncoloured simple root and $\beta^{(i)}\in\Phi^+$ a positive root with colour $i$, $-\alpha$ is compatible with $\beta^{(i)}$ if and only if $\alpha$ is not in the support of $\beta$.
Notice that the colour $i$ of $\beta^{(i)}$ does not matter in this case.\\
\\
Define a simplicial complex $\Delta^{(k)}(\Phi)$ as the set of all subsets $A\subseteq\Phi^{(k)}_{\geq-1}$ such that all $k$-coloured almost positive roots in $A$ are pairwise compatible. This is the \textit{generalised cluster complex} of $\Phi$.
This simplicial complex is \textit{pure}, all facets have cardinality $n$, where $n=|S|$ is the rank of $\Phi$. The subcomplex consisting of those faces containing no negative simple root is called the \textit{positive part} of $\Delta^{(k)}(\Phi)$.\\
\\
Now we can define the \textit{$F$-triangle} \cite[Definition 5.3.1.]{armstrong09generalized} as
$$F^k_{\Phi}(x,y)=\sum_{A\in\Delta^{(k)}(\Phi)}x^{|A^+|}y^{|A^-|}=\sum_{l,m}f_{l,m}^k(\Phi)x^ly^m\text{,}$$
where $A^+$ is the set of coloured positive roots in $A$ and $A^-$ is the set of uncoloured negative simple roots in $A$. Thus $f_{l,m}^k(\Phi)$ is the number of faces of $\Delta^{(k)}(\Phi)$ containing exactly $l$ coloured positive roots and exactly $m$ uncoloured negative simple roots.\\
\\
Consider the Weyl group $W=W(\Phi)$ of the root system $\Phi$. A \textit{Coxeter element} in this group is a product of all the simple reflections in some order. Let $T$ denote the set of reflections in $W$.
For $w\in W$, define the \textit{absolute length} $l_T(w)$ of $w$ as the minimal $l$ such that $w=t_1t_2\cdots t_l$ for some $t_1,t_2,\ldots, t_l\in T$.
Define the \textit{absolute order} on $W$ by 
$$u\leq_T v \text{ if and only if } l_T(u)+l_T(u^{-1}v)=l_T(v)\text{.}$$
Fix a Coxeter element $c\in W$. A \textit{$k$-delta sequence} is a sequence $\delta=(\delta_0,\delta_1,\ldots,\delta_k)$ with $\delta_i\in W$ for all $i\in\{0,1,\ldots,k\}$ such that $c=\delta_0\delta_1\cdots\delta_k$ and $l_T(c)=\sum_{i=0}^kl_T(\delta_i)$.
Define a partial order on $k$-delta sequences by 
$$\delta\leq\epsilon\text{ if and only if }\delta_i\leq_T\epsilon_i\text{ for all }i\in\{1,2,\ldots,k\}\text{.}$$
The set of $k$-delta sequences with this partial order is called the poset of \textit{\nolinebreak[4]{$k$-divisible} noncrossing partitions} $NC_{(k)}(\Phi)$ \cite[Definition 3.3.1.]{armstrong09generalized}. We drop the choice of the Coxeter element $c$ from the notation, since a different choice of Coxeter element results in a different but isomorphic poset.\\
\\
Now we can define the \textit{$M$-triangle} \cite[Definition 5.3.1]{armstrong09generalized} as
$$M^k_{\Phi}(x,y)=\sum_{\delta,\epsilon\in NC_{(k)}(\Phi)}\mu(\delta,\epsilon)x^{n-rk(\epsilon)}y^{n-rk(\delta)}\text{,}$$
where $rk$ is the rank function of the graded poset $NC_{(k)}(\Phi)$, $\mu$ is its Möbius function, and $n$ is the rank of $\Phi$.\\
\\
As mentioned in the introduction, $NC_{(k)}(\Phi)$, $NN^{(k)}(\Phi)$ and the set of facets of $\Delta^{(k)}(\Phi)$ are all counted by the same number $Cat^{(k)}(\Phi)$.
But more is true: define the \textit{Fu{\ss}-Narayana number} $Nar^{(k)}(\Phi,i)$ as the number of elements of $NC_{(k)}(\Phi)$ of rank $n-i$ \cite[Definition 3.5.4.]{armstrong09generalized}.
Let $(h_0,h_1,\ldots,h_n)$ be the \textit{$h$-vector} of $\Delta^{(k)}(\Phi)$, defined by the relation 
$$\sum_{i=0}^nh_ix^{n-i}=\sum_{l,m}f_{l,m}(x-1)^{n-(l+m)}\text{.}$$
Then $h_{n-i}=Nar^{(k)}(\Phi,i)$ for all $i\in\{0,1,\ldots,n\}$ \cite[Theorem 3.2.]{fomin05generalized}. In this paper, we prove the following conjecture of Fomin and Reading that relates the Fu{\ss}-Narayana numbers to $k$-generalised nonnesting partitions as well.
\begin{theorem}[\protect{\cite[Conjecture 10.1.]{fomin05generalized}}]\label{Nar}
 For a crystallographic root system $\Phi$, the number of $k$-generalised nonnesting partitions of $\Phi$ that have exactly $i$ indecomposable elements of rank $k$ equals the Fu{\ss}-Narayana number $Nar^{(k)}(\Phi,i)$.
\end{theorem}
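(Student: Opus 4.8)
The plan is to transport the statement to the $k$-Catalan arrangement via the bijection $\phi$ of \cite[Theorem 3.6.]{athanasiadis05refinement}, and there to recognise the relevant statistic as one governed by the generalised cluster complex. Since $\phi$ sends a dominant region $R$ of the $k$-Catalan arrangement to a chain $\phi(R)\in NN^{(k)}(\Phi)$ in which $H_{\alpha,i}$ is a floor of $R$ exactly when $\alpha$ is a rank $i$ indecomposable element of $\phi(R)$, the number of $I\in NN^{(k)}(\Phi)$ with $|i(I)|=i$ equals the number of dominant regions $R$ of the $k$-Catalan arrangement having exactly $i$ floors of colour $k$. Writing $c_k(R)$ for that number of $k$-coloured floors, the theorem becomes the identity $\sum_R x^{c_k(R)}=\sum_{i=0}^{n} Nar^{(k)}(\Phi,i)\,x^i$, and since $Nar^{(k)}(\Phi,i)=h_{n-i}$ by \cite[Theorem 3.2.]{fomin05generalized}, the right-hand side is exactly the $h$-polynomial $\sum_{j=0}^{n}h_jx^{n-j}$ of $\Delta^{(k)}(\Phi)$. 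So it suffices to show that $\sum_R x^{c_k(R)}$ is the $h$-polynomial of the generalised cluster complex.

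A first reduction is to the case that $\Phi$ is irreducible: the root poset of a direct sum is the disjoint union of those of the components, so the geometric-chain conditions and the notion of a rank $k$ indecomposable element decouple and $c_k$ is additive over components, while $NC_{(k)}(\Phi)$, and hence the Fu{\ss}--Narayana numbers, is likewise multiplicative; both sides of the displayed identity therefore factor over irreducible components. For irreducible $\Phi$ the heart of the argument is to construct a bijection between the dominant regions of the $k$-Catalan arrangement and the facets of a shelling of $\Delta^{(k)}(\Phi)$ under which $c_k(R)$ becomes the size of the restriction (minimal new) face of the corresponding facet; the standard expression of the $h$-vector of a shellable complex through restriction faces then yields the identity. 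To set this up I would lean on the combinatorial models of $\Delta^{(k)}(\Phi)$ and of the Catalan arrangement developed by Fomin and Reading \cite{fomin05generalized} and by Athanasiadis and Tzanaki, making precise how a colour-$k$ floor of a region shows up on the cluster side.

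The main obstacle, I expect, is carrying this out uniformly: the compatibility relation defining $\Delta^{(k)}(\Phi)$ is expressed through the $W$-action on almost positive roots, whereas floors of dominant regions live in the root order, so the dictionary between the two is delicate and is where most of the work would go. If a type-free dictionary proves too hard, a workable fallback is a case analysis for irreducible $\Phi$: the exceptional types by a finite computation from the explicit root posets, the geometric-chain conditions, and the known $h$-vectors, and the classical types $A,B,C,D$ by encoding geometric chains and their rank $k$ indecomposable elements in the usual lattice-path and arc-diagram models, in which $c_k$ turns into a classical ``number of returns'' statistic whose distribution is the Fu{\ss}--Narayana one. It would also be worth attempting, before invoking the cluster complex at all, to find a deletion recursion for $\sum_R x^{c_k(R)}$ --- for instance by removing a simple root from $S$ --- and to match it with a recursion for the Fu{\ss}--Narayana numbers coming from the structure of $NC_{(k)}(\Phi)$; the snag there is that deleting a simple root does not obviously respect the colour-$k$-floor statistic.
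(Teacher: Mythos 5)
Your opening reductions are fine: transporting the statistic to $k$-coloured floors of dominant regions via $\phi$, restating the claim as $\sum_R x^{c_k(R)}=\sum_i h_{n-i}x^i$, and factoring over irreducible components are all correct. But the heart of your argument --- a bijection between dominant regions and the facets of a shelling of $\Delta^{(k)}(\Phi)$ matching $c_k(R)$ with the size of the restriction face --- is not constructed; you yourself flag the dictionary between the root-order side and the almost-positive-root side as ``delicate and where most of the work would go.'' That is precisely the open content of the statement (it was a conjecture of Fomin and Reading), so as written the proposal is a plan with the key step missing, not a proof. For comparison, the paper does not attempt any uniform cluster-side bijection: it cites the known classical-type result of Athanasiadis, and for the exceptional types invokes Athanasiadis's theorem identifying $N^{(k)}(\Phi,i)$ with the number of coroot lattice points in the $(kh+1)$-fold dilation of the fundamental simplex lying on exactly $i$ walls, then finishes by an Ehrhart-series computation (Normaliz) plus inclusion--exclusion over faces.

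Your fallback also has a concrete gap. For the exceptional types you propose ``a finite computation from the explicit root posets,'' but the statement quantifies over all $k$, and $NN^{(k)}(\Phi)$ grows with $k$, so checking finitely many values of $k$ proves nothing unless you first show that $k\mapsto N^{(k)}(\Phi,i)$ is a polynomial (or quasipolynomial of known period) of bounded degree. Supplying exactly that is what the paper's Ehrhart-theoretic step is for: the lattice-point reformulation makes $N^{(k)}(\Phi,i)$ a quasipolynomial in $k$ of degree at most $n$ and period $1$ or $2$, after which agreement with the explicit Fu{\ss}--Narayana polynomials at $(n+1)p'$ values suffices. Without such a polynomiality argument (or a completed uniform bijection, or a worked-out deletion recursion --- which, as you note, does not obviously respect the colour-$k$-floor statistic), the proposal does not establish the theorem. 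The classical-type part of your fallback is harmless, since that case was already proved by Athanasiadis and can simply be cited, as the paper does.
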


The main result of this paper is the following theorem, conjectured by Armstrong.
\begin{theorem}[\protect{\cite[Conjecture 5.3.2.]{armstrong09generalized}}]\label{H=F}
If $\Phi$ is a crystallographic root system of rank $n$, then
$$H^k_{\Phi}(x,y)=(x-1)^nF^k_{\Phi}\left(\frac{1}{x-1},\frac{1+(y-1)x}{x-1}\right)\text{.}$$
\end{theorem}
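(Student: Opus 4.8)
The plan is to prove the theorem by induction on the rank of $\Phi$, reducing the inductive step to Theorem~\ref{Nar} by unfolding the right-hand side along standard parabolic subsystems. For $S'\subseteq S$ write $\Phi_{S'}$ for the standard parabolic subsystem generated by $S'$. Since a negative simple root $-\alpha$ is compatible with a coloured positive root $\beta^{(i)}$ exactly when $\alpha\notin\operatorname{supp}\beta$, and compatibility of coloured positive roots is inherited by standard parabolics \cite{fomin05generalized}, the link in $\Delta^{(k)}(\Phi)$ of the face $\{-\alpha:\alpha\in J\}$ is the positive part of $\Delta^{(k)}(\Phi_{S\setminus J})$, whence $F^k_\Phi(x,y)=\sum_{J\subseteq S}y^{|J|}F^{k,+}_{S\setminus J}(x)$ with $F^{k,+}_{S'}(x)=\sum_A x^{|A|}$ over the faces $A$ of the positive part of $\Delta^{(k)}(\Phi_{S'})$. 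Setting $H^{k,+}_{S'}(x):=(x-1)^{|S'|}F^{k,+}_{S'}\!\bigl(\tfrac1{x-1}\bigr)$ and substituting, Theorem~\ref{H=F} becomes equivalent to
\begin{equation*}
H^k_\Phi(x,y)=\sum_{J\subseteq S}\bigl(1-x+xy\bigr)^{|J|}\,H^{k,+}_{S\setminus J}(x).
\end{equation*}
The $y=1$ case of Theorem~\ref{H=F} reads $\sum_{I\in NN^{(k)}(\Phi)}x^{|i(I)|}=\sum_{i=0}^n h_i\,x^{n-i}$, which is Theorem~\ref{Nar} in view of $h_{n-i}=Nar^{(k)}(\Phi,i)$.

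The device that connects the two sides is the operation of peeling off a rank-$k$-simple element. If $\alpha\in s(I)$, i.e. $\alpha\in I_k\cap S$, then $I(\alpha)$ is contained in every $I_j$, so $I$ is recovered from its restriction $\bigl(I_1\cap\Phi^+_{S\setminus\{\alpha\}},\dots,I_k\cap\Phi^+_{S\setminus\{\alpha\}}\bigr)$ by re-adjoining $I(\alpha)$ to each term. One verifies directly that this restriction is a geometric chain for $\Phi_{S\setminus\{\alpha\}}$, that conversely every geometric chain for $\Phi_{S\setminus\{\alpha\}}$ arises this way from a unique $I$ with $\alpha\in s(I)$, and --- using the description of the rank-$k$-indecomposable elements as the elements of $I_k$ outside $\bigcup_{i+j=k}(I_i+I_j)$ \cite{athanasiadis05refinement} together with the elementary fact that a positive root strictly above $\alpha$ can be written as $\gamma+\delta$ with $\gamma,\delta\in\Phi^+$ and $\gamma\geq\alpha$ --- that no non-simple rank-$k$-indecomposable element of $I$ has $\alpha$ in its support. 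Hence restriction removes $\alpha$ from $s(I)$ and leaves the non-simple rank-$k$-indecomposable elements untouched; iterating over $J_0\subseteq S$ produces a bijection
\begin{equation*}
\{\,I\in NN^{(k)}(\Phi):J_0\subseteq s(I)\,\}\ \longrightarrow\ NN^{(k)}(\Phi_{S\setminus J_0})
\end{equation*}
carrying $s(I)$ to $s(I)\setminus J_0$ and preserving $|i(I)|-|s(I)|$.

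Using this bijection and the expansion $x^{|s(I)|}=\sum_{J_0\subseteq s(I)}(x-1)^{|J_0|}$, extract the coefficient of each $y^m$ in the displayed identity and perform the ensuing binomial inversions over subsets of $S$. The terms with $m\geq1$ then involve only proper parabolic subsystems and so hold by the inductive hypothesis, while the term with $m=0$ becomes the ``positive'' identity $H^{k,+}_S(x)=\sum_{I\in NN^{(k)}(\Phi)}(x-1)^{|s(I)|}x^{|i(I)|-|s(I)|}$; in the presence of the inductive hypothesis, a further subset-inversion shows this to be equivalent to Theorem~\ref{Nar} for $\Phi$ itself. Thus the whole theorem follows once Theorem~\ref{Nar} is established for every crystallographic root system.

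It remains to prove Theorem~\ref{Nar}. Here I pass to the geometric model via the bijection $\phi$ of \cite{athanasiadis05refinement}, under which the rank-$k$-indecomposable elements of $\phi(R)$ are the $k$-coloured floors of the dominant region $R$; Theorem~\ref{Nar} then asserts that the dominant regions of the $k$-Catalan arrangement are counted, by their number of $k$-coloured floors, according to the $h$-vector of $\Delta^{(k)}(\Phi)$. I would prove this by a shelling argument: fix a shelling of $\Delta^{(k)}(\Phi)$ and a bijection from its facets to the dominant regions under which the cardinality of the shelling restriction of a facet equals the number of $k$-coloured floors of the corresponding region, so that summing over facets reads off the $h$-vector entrywise. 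This last correspondence --- between the facets of the generalised cluster complex and the dominant regions of the $k$-Catalan arrangement, compatible with both the shelling and the count of $k$-coloured floors --- is the step I expect to be the main obstacle: rank-$k$-indecomposability is defined through the auxiliary statistic $k_\beta(I)$ and the decomposition conditions on geometric chains, so the bijection has to be engineered to interact transparently with these, and checking that it does is where the structural results on $NN^{(k)}(\Phi)$ advertised in the abstract do the work. For the classical types $A,B,C,D$ all of these verifications, including the peeling bijection, can be carried out in the explicit combinatorial models for geometric chains of order filters, and if a type-free argument proves elusive the exceptional types can be handled by direct computation.
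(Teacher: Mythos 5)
Your reduction of Theorem \ref{H=F} to Theorem \ref{Nar} is essentially sound, and it is in substance the same route as the paper's: your ``peeling off a rank-$k$ simple element'' map is exactly the bijection of Theorem \ref{Bij} (well-definedness via restriction to the parabolic, and the fact that no non-simple rank-$k$ indecomposable has $\alpha$ in its support, via minimality as in Lemma \ref{ind}), your expansion of $F^k_\Phi$ over subsets of negative simple roots is the integrated form of the differential equations of Lemmas \ref{DF} and \ref{DH} that the paper combines with induction on the rank, and the identity you land on in the end is the $y=1$ specialisation, i.e.\ Lemma \ref{y=1}, for $\Phi$ and its standard parabolics. So that half of the argument is fine, if somewhat more laboriously packaged.

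The genuine gap is that you do not prove Theorem \ref{Nar}, and everything is conditional on it. The shelling of $\Delta^{(k)}(\Phi)$ together with a facet-to-dominant-region bijection under which the cardinality of the shelling restriction equals the number of $k$-coloured floors is precisely the open content of Fomin and Reading's conjecture; you name it as the expected ``main obstacle'' but give no construction, and no such uniform bijection is known (even for $k=1$ this is not available off the shelf). Your fallback --- ``the exceptional types can be handled by direct computation'' --- does not close the gap as stated, because Theorem \ref{Nar} is an assertion for \emph{all} positive integers $k$: a finite computation only suffices once one knows that the number of dominant regions with a prescribed number of $k$-coloured floors depends (quasi)polynomially on $k$ with bounded degree and period. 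This finiteness input is exactly what the paper supplies, via Athanasiadis's reformulation (Theorem \ref{ath}: such regions correspond to coroot-lattice points of the $(kh+1)$-fold dilated fundamental simplex lying on exactly $i$ of its walls) and Ehrhart quasipolynomiality, which reduces the verification to $(n+1)p'$ values of $k$ per exceptional type, then checked with Normaliz against the known Fu{\ss}-Narayana formulas. Likewise, for the classical types you assert that the verifications ``can be carried out'' in the combinatorial models; this is true but is a citation to \cite[Proposition 5.1.]{athanasiadis05refinement}, not something your sketch establishes. Without these ingredients (or equivalents), your proposal proves Theorem \ref{H=F} only modulo Theorem \ref{Nar}, which is the heart of the matter.
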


We first establish \textbf{Theorem \ref{Nar}}. Then we find a combinatorial bijection for $k$-generalised nonnesting partitions that leads to a differential equation for the $H$-triangle analogous to one known for the $F$-triangle.
Using both of these differential equations, \textbf{Theorem \ref{Nar}}, and induction on the rank $n$, we prove \textbf{Theorem \ref{H=F}} by showing that the derivatives with respect to $y$ of both sides of the equation as well as their specialisations at $y=1$ agree.\\
\\
After proving \textbf{Theorem \ref{H=F}}, we use it to deduce various corollaries. Using the $M=F$ (ex-)conjecture, we also relate the $H$-triangle to the $M$-triangle. This allows us to transfer a remarkable instance of combinatorial reciprocity observed by Krattenthaler \cite[Theorem 8]{krattenthaler06mtriangle} for the $M$-triangle to the $H$-triangle.
From \textbf{Theorem \ref{Nar}} we also get a proof of a conjecture of Athanasiadis and Tzanaki \cite[Conjecture 1.2.]{athanasiadis06cluster} that relates the $h$-vector of the positive part of $\Delta^{(k)}(\Phi)$ to the enumeration of bounded dominant regions in the $k$-Catalan arrangement of $\Phi$ by their number of $k$-coloured ceilings.
%
\section{Proof of Theorem \ref{Nar}}
Let $N^{(k)}(\Phi,i)$ be the number of $k$-generalised nonnesting partitions of $\Phi$ that have exactly $i$ indecomposable elements of rank $k$. If $\Phi$ is a classical root system, $N^{(k)}(\Phi,i)$ is known \cite[Proposition 5.1.]{athanasiadis05refinement} to equal the Fu{\ss}-Narayana number $Nar^{(k)}(\Phi,i)$. We wish to verify this also when $\Phi$ is of exceptional type. It suffices to do this for $\Phi$ irreducible. Let $n$ be the rank, $h$ the Coxeter number and $\tilde{\alpha}$ the highest root of $\Phi$. Let $L(\Phi^{\vee})$ be the coroot lattice of $\Phi$. Define $\overline{A_\circ}=\{x\in V\mid\langle \alpha_i,x\rangle\geq0\text{ for all }\alpha_i \in S, \langle \tilde{\alpha},x\rangle\leq 1\}$, the \textit{fundamental simplex} of $\Phi$. 
In order to prove \textbf{Theorem \ref{Nar}}, we use the following result, due to Athanasiadis.
\begin{theorem}[\protect{\cite[Corollary 4.4.]{athanasiadis05refinement}}]\label{ath}
 The number of $k$-generalised nonnesting partitions of $\Phi$ that have exactly $i$ indecomposable elements of rank $k$ equals the number of elements of the coroot lattice in the $(kh+1)$-fold dilation of the fundamental simplex that are contained in exactly $i$ of its walls.
\end{theorem}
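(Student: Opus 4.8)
The plan is to reduce the statement to a single geometric bijection, using the region/filter bijection $\phi$ to handle the combinatorial side. Since $\phi$ sends a floor $H_{\alpha,i}$ of a dominant region $R$ to a rank $i$ indecomposable element $\alpha$ of $\phi(R)$ and conversely, the rank $k$ indecomposable elements of $\phi(R)$ are exactly the roots $\alpha$ with $H_{\alpha,k}$ a floor of $R$. Thus the left-hand side equals the number of dominant regions of the $k$-Catalan arrangement having exactly $i$ floors of colour $k$. On the other side, the walls of $(kh+1)\overline{A_\circ}$ are the $n$ linear hyperplanes $\{x:\langle x,\alpha\rangle=0\}$ with $\alpha\in S$ together with the single affine hyperplane $\{x:\langle x,\tilde\alpha\rangle=kh+1\}$, so a point $\mu\in L(\Phi^\vee)$ lies on exactly $i$ walls precisely when the number of simple roots $\alpha$ with $\langle\mu,\alpha\rangle=0$, increased by one when $\langle\mu,\tilde\alpha\rangle=kh+1$, equals $i$. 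It therefore suffices to build a bijection $\Psi$ between the dominant regions and the points of $L(\Phi^\vee)\cap(kh+1)\overline{A_\circ}$ that matches the colour-$k$ floors of each region with the walls of the dilated simplex passing through its image.

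To construct $\Psi$ I would work inside the affine Weyl group $\widetilde{W}$ generated by the reflections in all the hyperplanes $H_{\alpha,j}$ with $\alpha\in\Phi^+$ and $j\in\mathbb{Z}$, whose closed alcoves are the translates $w\overline{A_\circ}$ for $w\in\widetilde{W}$. Each dominant region $R$ of the $k$-Catalan arrangement contains a unique alcove $A_R=w_R\overline{A_\circ}$ nearest the origin, and by Shi's alcove description of the Catalan arrangement the map $R\mapsto w_R$ identifies the dominant regions with a distinguished family of affine Weyl group elements. I would attach to $R$ the lattice point $\mu=\Psi(R)$ obtained from the affine isometry $w_R$ and check that $\mu\in L(\Phi^\vee)\cap(kh+1)\overline{A_\circ}$. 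Here the dilation factor is forced: in the $k$-Catalan arrangement the highest root $\tilde\alpha$ climbs only to level $k$, and since root heights are controlled by the Coxeter number $h$, the outermost hyperplanes of the arrangement are exactly the ones carried onto the boundary of the $(kh+1)$-fold dilate. Granting the local correspondence below, $\Psi$ is then a bijection because its domain and codomain both have cardinality $Cat^{(k)}(\Phi)$.

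The heart of the proof, and the step I expect to be the main obstacle, is the statistic match: that under $\Psi$ the colour-$k$ floors of $R$ correspond bijectively to the walls of $(kh+1)\overline{A_\circ}$ containing $\mu$. The mechanism is that $w_R$ carries each separating wall of the minimal alcove $A_R$ to a hyperplane bounding the dilated simplex, the underlying root being relabelled by $w_R$ so that its image is one of the simple directions, giving a facet $\{x:\langle x,\alpha\rangle=0\}$, or the highest-root direction, giving the affine facet $\{x:\langle x,\tilde\alpha\rangle=kh+1\}$. Two points must be established: first, that it is precisely the colour-$k$ floors, and no floor of smaller colour, whose walls are sent to the boundary of the simplex, which is exactly where the dilation $kh+1$ and the definition of rank $k$ indecomposability enter; and second, that this assignment is a bijection onto the walls through $\mu$, yielding the equality of the two counts. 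Both require a careful analysis of which walls of the minimal alcove $A_R$ are separating and of their images under $w_R$, controlled by the geometric-chain conditions that guarantee the level data of $R$ is realised by an honest coroot lattice point. This local, facet-by-facet analysis, rather than the global enumeration, is where the real difficulty lies.
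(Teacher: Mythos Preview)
The paper does not give its own proof of this theorem; it is quoted from \cite{athanasiadis05refinement} and used as a black box in the proof of \textbf{Theorem~\ref{Nar}}. There is therefore no in-paper argument to compare your attempt against.

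On the merits of your outline: the first reduction is correct and is exactly what the paper records --- under $\phi$, rank~$k$ indecomposable elements correspond to colour-$k$ floors, so the left-hand side counts dominant regions by their number of colour-$k$ floors. The remainder is in the right spirit, and you are candid that it is a plan rather than a proof. Two points deserve emphasis. First, you never actually define $\Psi$: writing ``the lattice point $\mu=\Psi(R)$ obtained from the affine isometry $w_R$'' is not a construction, and your heuristic for the dilation factor $kh+1$ (root heights ``controlled by $h$'', outermost hyperplanes ``carried onto the boundary'') does not pin it down --- in Athanasiadis's argument the map and the specific factor $kh+1$ emerge from an explicit construction, not from such general considerations. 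Second, equality of cardinalities does not by itself make a map a bijection; you still owe injectivity or an inverse, and nothing in the statistic-matching paragraph supplies either. You correctly identify the facet-by-facet correspondence between colour-$k$ floors and walls of the dilated simplex as the crux, but that correspondence, together with the precise definition of $\Psi$, is essentially the entire content of the theorem. What you have written is an accurate roadmap for the proof with both hard steps left open.
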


\begin{proof}[Proof of Theorem \ref{Nar}]
We wish to count elements of $L(\Phi^{\vee})\cap t\overline{A_\circ}$ by the number of walls they are contained in and then specialise to $t=kh+1$. Now the simple coroots $\{\alpha_1^{\vee},\ldots,\alpha_n^{\vee}\}$ are a $\mathbb{Z}$-basis of $L(\Phi^{\vee})$. So if $\{e_1,\ldots,e_n\}$ is the standard basis of $\mathbb{R}^n$, the linear map $\Gamma$ defined by $\Gamma(\alpha_i^{\vee})=e_i$ gives a lattice isomorphism from $L(\Phi^{\vee})$ to $\mathbb{Z}^n$.
Let $P=\Gamma(\overline{A_\circ})$. Then 
\begin{align*}
 P&=\{y\in \mathbb{Z}^n\mid\left\langle\alpha_i,\Gamma^{-1}(y)\right\rangle\geq0\text{ for all }\alpha_i\in S, \left\langle\tilde{\alpha},\Gamma^{-1}(y)\right\rangle\leq 1\} \\
 &=\{y\in \mathbb{Z}^n\mid\left\langle\alpha_i,\sum_{j=1}^ny_j\alpha_j^{\vee}\right\rangle\geq0\text{ for all }\alpha_i\in S, \left\langle\sum_{i=1}^nc_i\alpha_i,\sum_{j=1}^ny_j\alpha_j^{\vee}\right\rangle\leq 1\} \\
 &=\{y\in \mathbb{Z}^n\mid\sum_{j=1}^na_{ji}y_j\geq0\text{ for all }\alpha_i\in S, \sum_{i=1}^nc_i\sum_{j=1}^na_{ji}y_j\leq 1\} \\
 &=\{y\in \mathbb{Z}^n\mid A^{T}y\geq0\text{ for all }\alpha_i\in S, c^TA^Ty\leq 1\}\text{.}
\end{align*}
Here $A=(a_{ij})=(\left\langle\alpha_i^{\vee},\alpha_j\right\rangle)$ is the Cartan matrix of $\Phi$, and $c=(c_1,\ldots,c_n)$ is the vector of coefficients in the simple root expansion of the highest root $\tilde{\alpha}=\sum_{i=1}^nc_i\alpha_i$.
Now since $\Gamma$ is an isomorphism, it maps walls to walls, so instead of counting elements of $L(\Phi^{\vee})\cap t\overline{A_\circ}$ by the number of walls they are contained in we count elements of $\Gamma(L(\Phi^{\vee})\cap t\overline{A_\circ})=\mathbb{Z}^n\cap tP$ by the number of walls they are contained in.\\
\\
Let $B$ be a face of $P$, and let $f_B(t)=|\mathbb{Z}^n\cap tB|$ be the number of lattice points in $tB$. Let $g_B(t)$ be the number of lattice points in $tB$ that are in no other face of $P$ contained in $B$. Then by the Inclusion-Exclusion Principle, $$g_B(t)=\sum_{C\subseteq B}(-1)^{dim(B)-dim(C)}f_C(t)\text{.}$$
\\
Using the Macaulay2 interface of Normaliz \cite{bruns10normaliz}, for any irreducible exceptional root system $\Phi$, we calculate the Ehrhart series $F_B(z)=\sum_{t=0}^{\infty}f_B(t)z^t$ for all faces $B$ of $P$. This is a rational function in $z$, since the vertices of $P$ are in $\mathbb{Q}^n$. From this we get $$G_B(z)=\sum_{t=0}^{\infty}g_B(t)z^t=\sum_{C\subseteq B}(-1)^{dim(B)-dim(C)}F_C(z)\text{.}$$
Then we define $N_i(z)=\sum_BG_B(z)$, where the sum is over all faces $B$ of $P$ of dimension $n-i$. So the number of lattice points in $tP$ that are contained in exactly $i$ walls of $tP$ is equal to $[z^t]N_i(z)$, the coefficient of $z^t$ in the power series $N_i(z)$. So $N^{(k)}(\Phi,i)=[z^{kh+1}]N_i(z)$. Recall from Ehrhart theory that $[z^t]F_B(z)$, and therefore also $[z^t]G_B(z)$ and $[z^t]N_i(z)$, are quasipolynomials in $t$, with degree at most $n$ and period $p$, the least common multiple of the denominators of the vertices of $P$.
Thus $N^{(k)}(\Phi,i)=[z^{kh+1}]N_i(z)$ is a quasipolynomial in $k$, with period $p'=\frac{lcm(p,h)}{h}$. It turns out that $p'$ is either $1$ or $2$ in every case. Since $Nar^{(k)}(\Phi,i)$ is a polynomial in $k$ of degree at most $n$, in order to verify that $N^{(k)}(\Phi,i)=Nar^{(k)}(\Phi,i)$ for all $i$ and $k$, one only needs to check for every $i$ that they agree for the first $(n+1)p'$ values. Since the generating function $N_i(z)$ has been computed and explicit formulae for $Nar^{(k)}(\Phi,i)$ are known \cite[Theorem 3.5.6.]{armstrong09generalized}, this is easily accomplished.

\end{proof}

\section{The Bijection}
The other main tool in the proof of \textbf{Theorem \ref{H=F}} is the following bijection.
\begin{theorem}\label{Bij}
 For every simple root $\alpha\in S$, there exists a bijection $\Theta$ from the set of $k$-generalised nonnesting partitions $I\in NN^{(k)}(\Phi(S))$ with $\alpha\in I_k$ to $NN^{(k)}(\Phi(S\backslash\{\alpha\}))$.
 The rank $l$ indecomposable elements of $\Theta(I)$ are exactly the rank $l$ indecomposable elements of $I$ if $l<k$.
 The rank $k$ indecomposable elements of $\Theta(I)$ are exactly the rank $k$ indecomposable elements of $I$ except for $\alpha$.
 \end{theorem}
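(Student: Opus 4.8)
The plan is to write down $\Theta$ and a candidate inverse $\Psi$ explicitly and then verify the required properties directly. Put $\Phi=\Phi(S)$ and $\Phi'=\Phi(S\setminus\{\alpha\})$, and recall the order filter $I(\alpha)$ generated by $\alpha$ from Section~2. Everything rests on two elementary observations: first, $\Phi^+$ decomposes as the disjoint union $\Phi^+=(\Phi')^+\sqcup I(\alpha)$, where $(\Phi')^+$ is exactly the set of positive roots with vanishing $\alpha$-coordinate, and the root order on $(\Phi')^+$ is the restriction of the root order on $\Phi^+$; second, if $\alpha\in I_k$ then $I(\alpha)\subseteq I_i$ for every $i\in\{0,1,\ldots,k\}$, since $\alpha\in I_k\subseteq I_i$ and each $I_i$ is an order filter. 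Accordingly I would define $\Theta(I)=(I_1\cap(\Phi')^+,\ldots,I_k\cap(\Phi')^+)$ for $I\in NN^{(k)}(\Phi)$ with $\alpha\in I_k$, and $\Psi(K)=(K_1\cup I(\alpha),\ldots,K_k\cup I(\alpha))$ for $K\in NN^{(k)}(\Phi')$, and show that these are mutually inverse bijections with the stated effect on indecomposable elements.

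First I would check that $\Theta$ and $\Psi$ take values where claimed. That $\Theta(I)$ and $\Psi(K)$ are descending chains of order filters (of $\Phi'$ and of $\Phi$ respectively) is routine, using for $\Psi(K)$ that $I(\alpha)$ is an order filter of $\Phi$ and that a root $\delta\geq\gamma$ with $\gamma\in K_i\subseteq(\Phi')^+$ either stays in $(\Phi')^+$, hence lies in $K_i$, or acquires a positive $\alpha$-coordinate, hence lies in $I(\alpha)$. The substantive point is the two geometric conditions. Here I would use the identities $(\Phi')^+\setminus\Theta(I)_i=\Phi^+\setminus I_i$ (valid because $I(\alpha)\subseteq I_i$) and $\Phi^+\setminus\Psi(K)_i=(\Phi')^+\setminus K_i$, together with the trivial remark that the summands of a positive root lying in $(\Phi')^+$, when it is written as a sum of two positive roots, are themselves in $(\Phi')^+$, whereas a sum of positive roots one of which lies in $I(\alpha)$ has positive $\alpha$-coordinate: the geometric inclusions for $\Theta(I)$ then follow by intersecting those for $I$ with $(\Phi')^+$, and those for $\Psi(K)$ follow by splitting each relevant sumset into the part coming from the $K_i$'s, controlled by $K$ being geometric, and the part involving $I(\alpha)$, which lands in $I(\alpha)\subseteq\Psi(K)_{i+j}$. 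Finally $\Theta$ and $\Psi$ are visibly mutually inverse, since $\Theta(\Psi(K))_i=(K_i\cup I(\alpha))\cap(\Phi')^+=K_i$ and $\Psi(\Theta(I))_i=(I_i\cap(\Phi')^+)\cup I(\alpha)=I_i$, the last equality again using $I(\alpha)\subseteq I_i$.

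For the statement about indecomposable elements the key lemma is that $k_\rho(I)=k_\rho(\Theta(I))$ for every $\rho\in(\Phi')^+$: in any decomposition $\rho=\rho_1+\cdots+\rho_r$ with $\rho_i\in I_{k_i}$ each $\rho_i$ has vanishing $\alpha$-coordinate, hence lies in $I_{k_i}\cap(\Phi')^+=\Theta(I)_{k_i}$, and conversely, so the admissible exponent tuples coincide. Granting this, for $\rho\in(\Phi')^+$ every clause in the definition of ``rank $l$ indecomposable element'' matches between $I$ and $\Theta(I)$: the membership and non-membership clauses because $\Theta(I)_j=I_j\cap(\Phi')^+$ and sumsets of subsets of $(\Phi')^+$ meet $(\Phi')^+$ only inside the corresponding $\Theta(I)$-sumset; and the final clause because, for the auxiliary root $\beta$, the subcase $\beta\in(\Phi')^+$ transfers verbatim (here $\rho+\beta\in(\Phi')^+$ and the lemma applies), while if $\beta$ has $\alpha$ in its support then $\beta\in I(\alpha)\subseteq I_k\subseteq I_{t-l}$, so the conclusion of the implication holds automatically on the $I$-side and there is nothing to match. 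On the other hand, any root $\rho$ with $\alpha$ in its support lies in $I(\alpha)\subseteq I_k$, so $k_\rho(I)\geq k$; hence such a $\rho$ can be rank $l$ indecomposable in $I$ only for $l=k$, and in that case, using that the covering relations of the root poset are additions of a simple root, if $\rho\neq\alpha$ one may write $\rho=\sigma+\alpha_j$ with $\alpha\leq\sigma<\rho$, so $\sigma\in I(\alpha)\subseteq I_k$ and $\alpha_j\in\Phi^+=I_0$, whence $\rho\in I_k+I_0$ and $\rho$ is not rank $k$ indecomposable; whereas $\alpha$ itself, having height one, lies in no sumset $I_i+I_j$ with $i+j=k$ and therefore, being in $I_k$, is always rank $k$ indecomposable in $I$. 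Putting these facts together yields exactly the asserted description of the indecomposable elements of $\Theta(I)$. I expect the main obstacles to be the verification that $\Psi(K)$ satisfies the two geometric conditions --- that adjoining the filter $I(\alpha)$ to a geometric chain over $\Phi'$ yields a geometric chain over $\Phi$ --- and the careful case analysis in the last clause of the definition of rank $l$ indecomposable element, where $\beta$ ranges over all of $\Phi^+$ and the roots involving $\alpha$ must be handled separately.
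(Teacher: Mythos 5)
Your proposal is correct and follows essentially the same route as the paper: the same maps $\Theta(I)_i=I_i\cap\Phi^+(S\setminus\{\alpha\})$ and $\Psi(K)_i=K_i\cup I(\alpha)$, the same complement identities used to verify the two geometric conditions, the same inverse check, and the same key observation $k_\rho(\Theta(I))=k_\rho(I)$ together with the same case split on whether the auxiliary root has $\alpha$ in its support. The only minor variation is how you exclude roots in $I(\alpha)\setminus\{\alpha\}$ from being indecomposable --- a direct argument writing $\rho=\sigma+\alpha_j$ with $\alpha\leq\sigma<\rho$, where the paper instead invokes its Lemma~\ref{ind} that indecomposable elements are minimal in $I_l$; both rest on the same standard root-poset decomposition fact.
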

In order to prove this, we first need a basic lemma, implicit in \cite{athanasiadis05refinement}.
\begin{lemma}\label{ind}
 The rank $l$ indecomposable elements of a $k$-generalised nonnesting partitions $I\in NN^{(k)}(\Phi)$ are minimal elements of $I_l$.
 \begin{proof}
  Let $\alpha\in I_l$ be an indecomposable element. Suppose for contradiction that $\alpha$ is not minimal in $I_l$, say $\alpha>\beta\in I_l$.
  Then $\alpha=\beta+\sum_{i=1}^m\alpha_i$, where $\alpha_i\in S$ for all $i\in[m]$.
  So $\sum_{i=1}^m\alpha_i\in\Phi$ or $\beta+\sum_{i\neq j}\alpha_i\in\Phi$ for some $j\in[m]$, by \cite[Lemma 2.1. (i)]{athanasiadis05refinement}.
  In the first case, $\alpha=\beta+\sum_{i=1}^m\alpha_i$, with $\beta\in I_l$ and $\sum_{i=1}^m\alpha_i\in V_0$, so $\alpha$ is not indecomposable.
  In the second case, $\alpha=\beta+\sum_{i\neq j}\alpha_i+\alpha_j$, with $\beta+\sum_{i\neq j}\alpha_i\in I_l$ and $\alpha_j\in I_0$, so $\alpha$ is not indecomposable.
 \end{proof}
\end{lemma}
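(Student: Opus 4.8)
The plan is to argue by contradiction, using only the third defining property of a rank $l$ indecomposable element, namely that $\alpha\notin I_i+I_j$ whenever $i+j=l$; concretely I will contradict the instance $\alpha\notin I_l+I_0$. So suppose $\alpha\in I_l$ is rank $l$ indecomposable yet fails to be minimal in $I_l$, and pick $\beta\in I_l$ with $\beta<\alpha$. By the definition of the root order, $\alpha-\beta$ is a nonzero element of $\langle S\rangle_{\mathbb N}$, so I can write $\alpha=\beta+\sum_{i=1}^m\alpha_i$ with each $\alpha_i\in S$ and $m\geq1$.

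The crucial structural input is the elementary root-theoretic dichotomy of \cite[Lemma 2.1. (i)]{athanasiadis05refinement}: for such an expression of a positive root as a positive root plus a sum of simple roots, either $\sum_{i=1}^m\alpha_i$ is itself a root, or there is some $j\in[m]$ for which $\beta+\sum_{i\neq j}\alpha_i$ is a root. I would split into these two cases and in each produce a decomposition $\alpha=\gamma+\delta$ with $\gamma\in I_l$ and $\delta\in I_0=\Phi^+$, contradicting indecomposability. In the first case take $\gamma=\beta$ and $\delta=\sum_{i=1}^m\alpha_i\in\Phi^+=I_0$. In the second case take $\gamma=\beta+\sum_{i\neq j}\alpha_i$ and $\delta=\alpha_j\in S\subseteq I_0$.

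The one place where the hypotheses are genuinely used, and the step I expect to be the only real subtlety, is verifying $\gamma\in I_l$ in the second case: here $\gamma=\beta+\sum_{i\neq j}\alpha_i\geq\beta\in I_l$, so $\gamma\in I_l$ precisely because $I_l$ is an order filter and hence upward closed. Everything else is a direct appeal to the sum-decomposition condition in the definition of indecomposability, with the dichotomy from \cite[Lemma 2.1. (i)]{athanasiadis05refinement} supplying the only nontrivial root-system fact.
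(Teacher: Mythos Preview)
Your proof is correct and follows essentially the same argument as the paper's own proof: both argue by contradiction, write $\alpha=\beta+\sum_{i=1}^m\alpha_i$, invoke \cite[Lemma 2.1.(i)]{athanasiadis05refinement} to obtain the same dichotomy, and in each case exhibit a decomposition $\alpha\in I_l+I_0$ violating indecomposability. If anything, your version is slightly more explicit in justifying $\gamma=\beta+\sum_{i\neq j}\alpha_i\in I_l$ via the order-filter property, a step the paper leaves tacit.
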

\begin{proof}[Proof of Theorem \ref{Bij}]
 Let $I(\alpha)$ be the order filter in the root poset generated by $\alpha$, that is the set of all positive roots that have $\alpha$ in their support.
 Define $\theta(I_i)=I_i\backslash I(\alpha)=I_i\cap\Phi(S\backslash\{\alpha\})$, where $\Phi(S\backslash\{\alpha\})$ is the root system with simple system $S\backslash\{\alpha\}$. Then let $\Theta(I)=(\theta(I_1),\theta(I_2),\ldots,\theta(I_k))$.\\
\\
  We claim that $\Theta(I)$ is a $k$-generalised nonnesting partition of $\Phi({S\backslash\{\alpha\}})$ and thus $\Theta$ is well-defined.\\
\\
    In order to see this, first observe that every $\theta(I_i)$ in $\Theta(I)$ is an order filter in the root poset of $\Phi(S\backslash\{\alpha\})$, and the $\theta(I_i)$ form a (multi)chain under inclusion. For all $i,j\in\{0,1,\ldots,k\}$,
    \begin{multline*}
     (\theta(I_i)+\theta(I_j))\cap\Phi^+(S\backslash\{\alpha\})\subseteq (I_i+I_j)\cap\Phi^+(S\backslash\{\alpha\})\\
     \subseteq I_{i+j}\cap\Phi^+(S\backslash\{\alpha\})=\theta(I_{i+j})\text{.}
    \end{multline*}
    Also $\theta(J_i)=J_i$ for all $i\in\{0,1,\ldots,k\}$, so $(\theta(J_i)+\theta(J_j))\cap\Phi^+(S\backslash\{\alpha\})\subseteq\theta(J_{i+j})$ for all $i,j$ with $i+j\leq k$.
    So $\Theta(I)$ is a geometric (multi)chain of order fiters in the root order of $\Phi({S\backslash\{\alpha\}})$, and the claim follows.
\\

 Now define a map $\Psi$ from $NN^{(k)}(\Phi(S\backslash\{\alpha\}))$ to the set of $I\in NN^{(k)}(\Phi(S))$ with $\alpha\in I_k$ by $\psi(I_i)=I_i\cup I(\alpha)$, and $\Psi(I)=(\psi(I_1),\psi(I_2),\ldots,\psi(I_k))$.\\
\\
 We claim that $\Psi(I)$ is a $k$-generalised nonnesting partition of $\Phi(S)$ and thus $\Psi$ is well-defined.\\
\\
   In order to see this, first observe that every $\psi(I_i)$ is an order filter in the root poset of $\Phi(S)$ and the $\psi(I_i)$ form a (multi)chain under inclusion. For all $i,j\in\{0,1,\ldots,k\}$,
   \begin{multline*}
    (\psi(I_i)+\psi(I_j))\cap\Phi^+(S)=((I_i\cup I(\alpha))+(I_j\cup I(\alpha))\cap\Phi^+(S)\\
    \subseteq((I_i+I_j)\cap\Phi^+(S))\cup I(\alpha)\subseteq I_{i+j}\cup I(\alpha)=\psi(I_{i+j})\text{.}
   \end{multline*}
   Also $\psi(J_i)=J_i$ for all $i\in\{0,1,\ldots,k\}$, so 
   $$(\psi(J_i)+\psi(J_j))\cap\Phi^+(S)=(\psi(J_i)+\psi(J_j))\cap\Phi^+(S\backslash\{\alpha\})\subseteq\psi(J_{i+j})$$
   for all $i,j$ with $i+j\leq k$.
   So $\Theta(I)$ is a geometric (multi)chain of order fiters in the root order of $\Phi(S)$, and the claim follows.\\
\\
 Now $\Theta$ and $\Psi$ are inverse to each other, so $\Theta$ is a bijection, as required.\\
\\
 We claim that for $\beta\in\Phi^+$, $\beta$ is a rank $l$ indecomposable element in $\Theta(I)$ if and only if $\beta$ is a rank $l$ indecomposable element in $I$ and $\beta\neq\alpha$.\\
\\
   In order to see this, first notice that for $\beta\in\theta(I_l)$, $k_{\beta}(\Theta(I))=k_{\beta}(I)$.
   The only element in $I_l\backslash\theta(I_l)=I(\alpha)$ that can be indecomposable of rank $l$ is $\alpha$, since all other elements are not minimal, so not indecomposable by \textbf{Lemma \ref{ind}}.
   So if $\beta\neq\alpha$ is a rank $l$ indecomposable element of $V$, then $\beta\in\theta(I_l)$.
   If $\beta$ were not indecomposable in $\Theta(I)$, then either $\beta=\gamma+\delta$ for $\gamma\in\theta(I_i)$, $\delta\in\theta(I_j)$, with $i+j=l$, in contradiction to $\beta$ being indecomposable in $I$, or there is a $\gamma\notin\theta(I_{t-l})$ with $\beta+\gamma\in\theta(I_t)$ and $k_{\beta+\gamma}(\Theta(I))=t$, for some $l\leq t\leq k$, also in contradiction to $\beta$ being indecomposable in $I$.
   So $\beta$ is rank $l$ indecomposable in $\Theta(I)$.\\
   \\
   Now for $\beta$ a rank $l$ indecomposable element of $\Theta(I)$, suppose for contradiction that $\beta$ were not indecomposable in $I$.
   If $\beta=\gamma+\delta$ for $\gamma\in\ I_i$, $\delta\in\ I_j$, with $i+j=l$, then $\alpha$ is not in the support of either $\gamma$ or $\delta$, so $\gamma\in\theta(I_i)$ and $\delta\in\theta(I_j)$, a contradiction to $\beta$ being indecomposable in $\Theta(I)$.
   If $\beta+\gamma\in I_t$ and $k_{\beta+\gamma}(I)=t$ for some $l\leq t\leq k$, and $\gamma\in\Phi(S\backslash\{\alpha\})$, then $\gamma\in \theta(I_t)\subseteq I_t$, as $\beta$ is indecomposable in $\Theta(I)$.
   If $\beta+\gamma\in I_t $ for some $l\leq t\leq k$ and $\gamma\notin\Phi(S\backslash\{\alpha\})$, then $\gamma\in I(\alpha)$, so $\gamma\in I_k\subseteq I_{t-l}$.
   So $\beta$ is indecomposable in $I$.
   This establishes the claim.\\
   \\
   Thus $\Theta$ is a bijection having the desired properties.

 \end{proof}

\section{Proof of the Main Result}
To prove \textbf{Theorem \ref{H=F}}, we show that the derivatives with respect to $y$ of both sides of the equation agree, as well as their specialisations at $y=1$. To do this, we need the following lemmas.
\begin{lemma}\label{y=1}
If $\Phi$ is a crystallographic root system of rank $n$, then
 $$H^k_{\Phi}(x,1)=(x-1)^nF^k_{\Phi}\left(\frac{1}{x-1},\frac{1}{x-1}\right)\text{.}$$
\begin{proof}
We have
$$(x-1)^nF^k_{\Phi}\left(\frac{1}{x-1},\frac{1}{x-1}\right)=\sum_{l,m}f_{l,m}(x-1)^{n-(l+m)}=\sum_{i=0}^nh_ix^{n-i}\text{,}$$
where $(h_0,h_1,\ldots,h_n)$ is the $h$-vector of $\Delta^{(k)}(\Phi)$. So 
$$[x^i](x-1)^nF^k_{\Phi}\left(\frac{1}{x-1},\frac{1}{x-1}\right)=h_{n-i}=Nar^{(k)}(\Phi,i)\text{,}$$
by \cite[Theorem 3.2.]{fomin05generalized}.
But
\begin{equation*}
 [x^i]H^k_{\Phi}(x,1)=Nar^{(k)}(\Phi,i)\text{,}
\end{equation*}
by \textbf{Theorem \ref{Nar}}.
 \end{proof}
\end{lemma}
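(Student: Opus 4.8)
The plan is to prove the identity by showing that both sides are polynomials in $x$ and that, for every $i\in\{0,1,\ldots,n\}$, the coefficient of $x^i$ on each side equals the Fu{\ss}-Narayana number $Nar^{(k)}(\Phi,i)$; comparing coefficients then yields the equality.

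For the right-hand side, I would expand $F^k_{\Phi}(x,y)=\sum_{l,m}f^k_{l,m}(\Phi)\,x^ly^m$ and substitute $x=y=\frac{1}{x-1}$, obtaining $(x-1)^nF^k_{\Phi}\left(\frac{1}{x-1},\frac{1}{x-1}\right)=\sum_{l,m}f_{l,m}\,(x-1)^{n-(l+m)}$. By the defining relation of the $h$-vector $(h_0,\ldots,h_n)$ of $\Delta^{(k)}(\Phi)$ this equals $\sum_{i=0}^nh_i\,x^{n-i}$, so its coefficient of $x^i$ is $h_{n-i}$; and $h_{n-i}=Nar^{(k)}(\Phi,i)$ by \cite[Theorem 3.2.]{fomin05generalized}.

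For the left-hand side, setting $y=1$ in the definition $H^k_{\Phi}(x,y)=\sum_{I\in NN^{(k)}(\Phi)}x^{|i(I)|}y^{|s(I)|}$ gives $H^k_{\Phi}(x,1)=\sum_{I}x^{|i(I)|}$, whose coefficient of $x^i$ is precisely the number of $k$-generalised nonnesting partitions with exactly $i$ rank $k$ indecomposable elements. By \textbf{Theorem \ref{Nar}} this number is again $Nar^{(k)}(\Phi,i)$, which completes the argument.

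I do not expect a genuine obstacle here: the substance is already packaged in \textbf{Theorem \ref{Nar}} (proved above via Athanasiadis's lattice-point reformulation and a finite computation) together with the known identification $h_{n-i}=Nar^{(k)}(\Phi,i)$, and what remains is bookkeeping. The only point deserving a moment's attention is the substitution $x,y\mapsto\frac{1}{x-1}$: one should note that every face $A$ of $\Delta^{(k)}(\Phi)$ satisfies $|A^+|+|A^-|=l+m\le n$, since the complex is pure of dimension $n-1$, so each $(x-1)^{n-(l+m)}$ is an honest polynomial and the coefficient extraction on the right-hand side is legitimate.
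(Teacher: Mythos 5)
Your proposal is correct and follows essentially the same route as the paper: expand $(x-1)^nF^k_{\Phi}\left(\frac{1}{x-1},\frac{1}{x-1}\right)$ via the $h$-vector of $\Delta^{(k)}(\Phi)$, identify $h_{n-i}=Nar^{(k)}(\Phi,i)$ by Fomin--Reading, and match it coefficientwise with $H^k_{\Phi}(x,1)$ using Theorem \ref{Nar}. The extra remark on purity guaranteeing $l+m\le n$ is a harmless bit of added care, not a different argument.
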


\begin{lemma}[\protect{\cite[Proposition F (2)]{krattenthaler06ftriangle}}]\label{DF}
If $\Phi$ is a crystallographic root system of rank $n$, then
 $$\frac{\partial}{\partial y}F^k_{\Phi(S)}(x,y)=\sum_{\alpha\in S}F^k_{\Phi(S\backslash\{\alpha\})}(x,y)\text{.}$$
 \begin{proof}
  As mentioned in \cite{krattenthaler06ftriangle}, this can be proven in the same way as the $k=1$ case, where it is due to Chapoton \cite[Proposition 3]{chapoton04enumerative}. For completeness, as well as to highlight the analogy to the proof of \textbf{Lemma \ref{DH}}, we give the proof here.\\
  \\
  We wish to show that
  $$mf_{l,m}^k(\Phi)=\sum_{\alpha\in S}f_{l,m-1}^k(\Phi(S\backslash\{\alpha\}))\text{,}$$
  that is, we seek a bijection $\varphi$ from the set of pairs $(A,-\alpha)$ with $A\in\Delta^{(k)}(\Phi(S))$ and $-\alpha\in A\cap(-S)$ to $\amalg_{\alpha\in S}\Delta^{(k)}(\Phi(S\backslash\{\alpha\}))$ such that $\varphi(A)$ contains the same number of coloured positive roots as $A$, but exactly one less uncoloured negative simple root. By \cite[Proposition 3.5.]{fomin05generalized}, the map $\varphi(A,-\alpha)=(A\backslash\{-\alpha\},\alpha)$ is such a bijection.
 \end{proof}

\end{lemma}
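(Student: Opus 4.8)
The plan is to prove the identity coefficient-wise. Differentiating the definition $F^k_{\Phi(S)}(x,y)=\sum_{l,m}f_{l,m}^k(\Phi(S))x^ly^m$ term by term shows that the asserted differential equation is equivalent to
$$mf_{l,m}^k(\Phi(S))=\sum_{\alpha\in S}f_{l,m-1}^k(\Phi(S\backslash\{\alpha\}))$$
for all $l,m$. The left-hand side counts pairs $(A,-\alpha)$ where $A\in\Delta^{(k)}(\Phi(S))$ has exactly $l$ coloured positive roots and exactly $m$ uncoloured negative simple roots and $-\alpha$ is one of those $m$ negative simple roots; the right-hand side counts faces of the disjoint union $\amalg_{\alpha\in S}\Delta^{(k)}(\Phi(S\backslash\{\alpha\}))$ with $l$ coloured positive roots and $m-1$ uncoloured negative simple roots. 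Hence it suffices to exhibit a bijection $\varphi$ between these two sets that preserves the number of coloured positive roots and lowers the number of uncoloured negative simple roots by exactly one.

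The natural candidate is $\varphi(A,-\alpha)=(A\backslash\{-\alpha\},\alpha)$, placing the output in the $\alpha$-indexed component of the disjoint union. First I would check that it is well-defined. Since $-\alpha\in A$ and all elements of a face are pairwise compatible, every coloured positive root $\beta^{(i)}\in A$ is compatible with $-\alpha$, which by the definition of compatibility means $\alpha$ is not in the support of $\beta$; thus $\beta\in\Phi^+(S\backslash\{\alpha\})$. Likewise each remaining negative simple root $-\gamma\in A$ has $\gamma\in S\backslash\{\alpha\}$. So every element of $A\backslash\{-\alpha\}$ is a $k$-coloured almost positive root of $\Phi(S\backslash\{\alpha\})$, and it remains to confirm that they are still pairwise compatible \emph{as elements of} $\Phi^{(k)}_{\geq-1}(S\backslash\{\alpha\})$.

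The crux is the consistency of the compatibility relation between the ambient system and the subsystem: for two coloured positive roots of $\Phi(S\backslash\{\alpha\})$, and for a coloured positive root together with a negative simple root $-\gamma$ with $\gamma\in S\backslash\{\alpha\}$, compatibility computed in $\Phi(S)$ must coincide with compatibility computed in $\Phi(S\backslash\{\alpha\})$. For the negative-simple/positive pairs this is immediate, since the support of a root supported on $S\backslash\{\alpha\}$ is the same in either system. For two positive roots it is precisely the content of \cite[Proposition 3.5.]{fomin05generalized}, which identifies $\Delta^{(k)}(\Phi(S\backslash\{\alpha\}))$ with the link of the vertex $-\alpha$ in $\Delta^{(k)}(\Phi(S))$; I expect this to be the main obstacle, as it is the only point where the full compatibility relation (rather than the partial description recalled in the definitions) enters, and verifying it directly would require the explicit combinatorial model of compatibility. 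Granting it, $\varphi$ indeed lands in $\Delta^{(k)}(\Phi(S\backslash\{\alpha\}))$, and its inverse $(A',\alpha)\mapsto(A'\cup\{-\alpha\},-\alpha)$ is well-defined by the same link identification, so $\varphi$ is a bijection. Finally, $\varphi$ deletes only the negative simple root $-\alpha$ and alters nothing else, so it fixes the set $A^+$ of coloured positive roots and removes exactly one element of $A^-$, giving the required statistics and completing the proof.
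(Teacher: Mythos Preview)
Your argument is correct and follows exactly the paper's approach: reduce the differential identity to the coefficient identity $mf_{l,m}^k(\Phi(S))=\sum_{\alpha\in S}f_{l,m-1}^k(\Phi(S\backslash\{\alpha\}))$ and verify it via the bijection $\varphi(A,-\alpha)=(A\backslash\{-\alpha\},\alpha)$, with well-definedness and invertibility supplied by \cite[Proposition 3.5.]{fomin05generalized}. Your additional unpacking of why the elements of $A\backslash\{-\alpha\}$ lie in $\Phi^{(k)}_{\geq -1}(S\backslash\{\alpha\})$ and why compatibility is preserved is accurate and makes explicit what the paper leaves to the citation.
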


\begin{lemma}\label{DH}
If $\Phi$ is a crystallographic root system of rank $n$, then
 $$\frac{\partial}{\partial y}H^k_{\Phi(S)}(x,y)=x\sum_{\alpha\in S}H^k_{\Phi(S\backslash\{\alpha\})}(x,y)\text{.}$$
 \begin{proof}
  Analogously to \textbf{Lemma \ref{DF}}, we seek a bijection $\Theta$ from the set of pairs $(I,\alpha)$ with $I\in NN^{(k)}(\Phi(S))$ and $\alpha\in I_k\cap S$ to $\amalg_{\alpha\in S}NN^{(k)}(\Phi(S\backslash\{\alpha\}))$ such that $\Theta(I)$ has exactly one less rank $k$ simple element and exactly one less rank $k$ indecomposable element than $I$.
  Such a bijection is given in \textbf{Theorem \ref{Bij}}.
 \end{proof} 
\end{lemma}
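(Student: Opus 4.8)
The plan is to read both sides of the asserted identity as generating functions and then match them term-by-term through the bijection $\Theta$ of \textbf{Theorem \ref{Bij}}. Differentiating the defining sum $H^k_{\Phi(S)}(x,y)=\sum_I x^{|i(I)|}y^{|s(I)|}$ with respect to $y$ brings down a factor $|s(I)|$, so that
$$\frac{\partial}{\partial y}H^k_{\Phi(S)}(x,y)=\sum_{I\in NN^{(k)}(\Phi(S))}|s(I)|\,x^{|i(I)|}y^{|s(I)|-1}=\sum_{(I,\alpha)}x^{|i(I)|}y^{|s(I)|-1}\text{,}$$
where the last sum ranges over pairs $(I,\alpha)$ with $\alpha$ a rank $k$ simple element of $I$. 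Thus the left-hand side is the generating function for such pairs, and the task reduces to producing a weight-preserving bijection between these pairs and $\amalg_{\alpha\in S}NN^{(k)}(\Phi(S\backslash\{\alpha\}))$ under which the exponent of $x$ drops by exactly one while the exponent of $y$ is preserved.

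The key observation is that, for a simple root $\alpha\in S$, the condition that $\alpha$ be a rank $k$ simple element of $I$ is equivalent to the condition $\alpha\in I_k$: the final remark in the definition of indecomposable elements guarantees that every simple root lying in $I_k$ is automatically rank $k$ indecomposable, hence rank $k$ simple. So the pairs $(I,\alpha)$ above are precisely those with $\alpha\in S$ and $\alpha\in I_k$, which is exactly the domain of the bijection $\Theta$ of \textbf{Theorem \ref{Bij}}. I would therefore define the pairing by sending $(I,\alpha)$ to $\Theta(I)\in NN^{(k)}(\Phi(S\backslash\{\alpha\}))$, placed in the summand indexed by $\alpha$; this is a bijection onto the disjoint union because $\Theta$ is a bijection for each fixed $\alpha$.

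It then remains to verify that this pairing transforms the weights correctly. By \textbf{Theorem \ref{Bij}} the rank $k$ indecomposable elements of $\Theta(I)$ are exactly those of $I$ with $\alpha$ removed, so $|i(\Theta(I))|=|i(I)|-1$. For the simple statistic, the rank $k$ simple elements of $\Theta(I)$ are the rank $k$ indecomposable elements of $\Theta(I)$ lying in the simple system $S\backslash\{\alpha\}$ of the smaller root system; since those indecomposables are $i(I)\backslash\{\alpha\}$ and the simple ones among them are $s(I)\backslash\{\alpha\}$, we obtain $|s(\Theta(I))|=|s(I)|-1$. Hence each pair contributes $x^{|i(I)|}y^{|s(I)|-1}=x\cdot x^{|i(\Theta(I))|}y^{|s(\Theta(I))|}$, and summing over all pairs while pulling out the factor $x$ yields $x\sum_{\alpha\in S}H^k_{\Phi(S\backslash\{\alpha\})}(x,y)$, as claimed. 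I expect the only delicate point to be the second weight computation: one must confirm that $\Theta$ neither destroys a rank $k$ simple element other than $\alpha$ nor creates a spurious one, which is precisely what the indecomposability-preservation clause of \textbf{Theorem \ref{Bij}} supplies, once one notes that on the target side simplicity is measured against $S\backslash\{\alpha\}$.
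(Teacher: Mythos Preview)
Your proposal is correct and follows exactly the paper's approach: both reduce the identity to the existence of a bijection from pairs $(I,\alpha)$ with $\alpha\in I_k\cap S$ to $\amalg_{\alpha\in S}NN^{(k)}(\Phi(S\backslash\{\alpha\}))$ that decreases both $|i(\cdot)|$ and $|s(\cdot)|$ by one, and both invoke \textbf{Theorem \ref{Bij}} for that bijection. You simply spell out the details (the derivative computation, the equivalence of ``rank $k$ simple'' with $\alpha\in I_k\cap S$, and the weight bookkeeping) that the paper leaves implicit.
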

We are now in a position to prove \textbf{Theorem \ref{H=F}}.
\begin{proof}[Proof of Theorem \ref{H=F}]
 We proceed by induction on $n$. If $n=0$, both sides are equal to 1, so the result holds. If $n>0$,
 $$\frac{\partial}{\partial y}H^k_{\Phi(S)}(x,y)=x\sum_{\alpha\in S}H^k_{\Phi(S\backslash\{\alpha\})}(x,y)\text{,}$$
 by \textbf{Lemma \ref{DH}}. By induction hypothesis, this is further equal to 
 $$x\sum_{\alpha\in S}(x-1)^{n-1}F^k_{\Phi(S\backslash\{\alpha\})}\left(\frac{1}{x-1},\frac{1+(y-1)x}{x-1}\right)\text{,}$$
 which equals
 $$\frac{\partial}{\partial y}(x-1)^nF^k_{\Phi(S)}\left(\frac{1}{x-1},\frac{1+(y-1)x}{x-1}\right)$$
 by \textbf{Lemma \ref{DF}}.
 But
 $$H^k_{\Phi}(x,1)=(x-1)^nF^k_{\Phi}\left(\frac{1}{x-1},\frac{1}{x-1}\right)$$
 by \textbf{Lemma \ref{y=1}}, so
$$H^k_{\Phi}(x,y)=(x-1)^nF^k_{\Phi}\left(\frac{1}{x-1},\frac{1+(y-1)x}{x-1}\right)\text{,}$$
since the derivatives with respect to $y$ as well as the specialisations at $y=1$ of both sides agree.
\end{proof}
\section{Corollaries of the Main Result}
Specialising \textbf{Theorem \ref{H=F}} to $k=1$, we can now prove Chapoton's original conjecture.

\begin{corollary}[\protect{\cite[Conjecture 6.1.]{chapoton06sur}}]\label{k=1}
 If $\Phi$ is a crystallographic root system of rank $n$, then
 $$H^1_{\Phi}(x,y)=(1-x)^nF^1_{\Phi}\left(\frac{x}{1-x},\frac{xy}{1-x}\right)\text{.}$$
 \begin{proof}
  We have
  \begin{align} \label{H=F,k=1}
   H^1_{\Phi}(x,y)=(x-1)^nF^1_{\Phi}\left(\frac{1}{x-1},\frac{1+(y-1)x}{x-1}\right)\text{.}
  \end{align}
  But we also have \cite[Proposition 5]{chapoton04enumerative}
  \begin{align} \label{Fdual}
   F^1_{\Phi}(x,y)=(-1)^nF^1_{\Phi}(-1-x,-1-y)\text{.}
  \end{align}
  Substituting (\ref{Fdual}) into (\ref{H=F,k=1}), we obtain
  $$H^1_{\Phi}(x,y)=(1-x)^nF^1_{\Phi}\left(\frac{x}{1-x},\frac{xy}{1-x}\right)\text{.}$$
 \end{proof}

\end{corollary}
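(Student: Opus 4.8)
The plan is to derive Corollary \ref{k=1} as a purely formal manipulation starting from the $k=1$ case of Theorem \ref{H=F}, using the known functional equation for the $F$-triangle. First I would write down the two ingredients side by side: equation \eqref{H=F,k=1}, which is just Theorem \ref{H=F} specialised at $k=1$, and equation \eqref{Fdual}, the self-duality $F^1_{\Phi}(x,y)=(-1)^nF^1_{\Phi}(-1-x,-1-y)$ from Chapoton \cite[Proposition 5]{chapoton04enumerative}. Both are available to me without further proof.

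The substitution is the heart of the argument. In \eqref{H=F,k=1} the $F$-triangle is evaluated at the point $\bigl(\tfrac{1}{x-1},\tfrac{1+(y-1)x}{x-1}\bigr)$. I would apply \eqref{Fdual} with this point as input, so that $F^1_{\Phi}\bigl(\tfrac{1}{x-1},\tfrac{1+(y-1)x}{x-1}\bigr)=(-1)^nF^1_{\Phi}\bigl(-1-\tfrac{1}{x-1},-1-\tfrac{1+(y-1)x}{x-1}\bigr)$. Then the routine step is to simplify the two new arguments: $-1-\tfrac{1}{x-1}=\tfrac{-x}{x-1}=\tfrac{x}{1-x}$, and $-1-\tfrac{1+(y-1)x}{x-1}=\tfrac{-(x-1)-1-(y-1)x}{x-1}=\tfrac{-xy}{x-1}=\tfrac{xy}{1-x}$. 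Finally I would fold the sign $(-1)^n$ into the prefactor via $(x-1)^n(-1)^n=(1-x)^n$, yielding exactly $H^1_{\Phi}(x,y)=(1-x)^nF^1_{\Phi}\bigl(\tfrac{x}{1-x},\tfrac{xy}{1-x}\bigr)$, as claimed.

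There is essentially no obstacle here beyond bookkeeping with signs and the common denominator $x-1$; the only thing to be careful about is that these are identities of rational functions (or formal power series after clearing denominators), so substituting one rational expression into another is legitimate, and no convergence or invertibility issue arises. The genuine content — Theorem \ref{H=F} at $k=1$ and the $F$-triangle duality — has already been established or cited, so this corollary is a short formal consequence.
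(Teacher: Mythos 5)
Your proposal is correct and is essentially the same argument as the paper's: specialise Theorem \ref{H=F} to $k=1$, apply Chapoton's duality $F^1_{\Phi}(x,y)=(-1)^nF^1_{\Phi}(-1-x,-1-y)$ at the substituted point, and simplify; your explicit simplifications $-1-\tfrac{1}{x-1}=\tfrac{x}{1-x}$ and $-1-\tfrac{1+(y-1)x}{x-1}=\tfrac{xy}{1-x}$ together with $(x-1)^n(-1)^n=(1-x)^n$ are exactly the bookkeeping the paper leaves implicit.
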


Using the $M=F$ (ex-)conjecture, we can also relate the $H$-triangle to the $M$-triangle.

\begin{corollary}[\protect{\cite[Conjecture 5.3.2.]{armstrong09generalized}}]\label{H=M}
 If $\Phi$ is a crystallographic root system of rank $n$, then
 $$H^k_{\Phi}(x,y)=(1+(y-1)x)^nM^k_{\Phi}\left(\frac{y}{y-1},\frac{(y-1)x}{1+(y-1)x}\right)\text{.}$$
 \begin{proof}
  We have
  \begin{align}\label{H=Feq}
   H^k_{\Phi}(x,y)=(x-1)^nF^k_{\Phi}\left(\frac{1}{x-1},\frac{1+(y-1)x}{x-1}\right)\text{.}
  \end{align}
  But we also have \cite[Conjecture FM]{krattenthaler06ftriangle} \cite[Theorem 1.2.]{tzanaki08faces}
  \begin{align}\label{H=Meq}
   F^k_{\Phi}(x,y)=y^nM^k_{\Phi}\left(\frac{1+y}{y-x},\frac{y-x}{y}\right)\text{.}
  \end{align}
  Substituting (\ref{H=Meq}) into (\ref{H=Feq}), we obtain 
  $$H^k_{\Phi}(x,y)=(1+(y-1)x)^nM^k_{\Phi}\left(\frac{y}{y-1},\frac{(y-1)x}{1+(y-1)x}\right)\text{.}$$
 \end{proof}
\end{corollary}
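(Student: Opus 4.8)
The plan is to deduce this identity purely formally from \textbf{Theorem \ref{H=F}} together with the (ex-)conjecture $M=F$, which is now a theorem of Krattenthaler \cite{krattenthaler06ftriangle,krattenthaler06mtriangle} and Tzanaki \cite{tzanaki08faces} in the generalised form needed here. Concretely, I would record the two identities
$$H^k_{\Phi}(x,y)=(x-1)^nF^k_{\Phi}\left(\frac{1}{x-1},\frac{1+(y-1)x}{x-1}\right)$$
from \textbf{Theorem \ref{H=F}}, and
$$F^k_{\Phi}(u,v)=v^nM^k_{\Phi}\left(\frac{1+v}{v-u},\frac{v-u}{v}\right)$$
from the $F=M$ relation, and then substitute the second into the first with $u=\tfrac{1}{x-1}$ and $v=\tfrac{1+(y-1)x}{x-1}$.

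The bulk of the argument is then the elementary simplification of the composed substitution. I would compute $v-u=\tfrac{(y-1)x}{x-1}$, whence $\tfrac{v-u}{v}=\tfrac{(y-1)x}{1+(y-1)x}$ and, after clearing denominators in the numerator and cancelling the factor $(y-1)x$, $\tfrac{1+v}{v-u}=\tfrac{yx}{(y-1)x}=\tfrac{y}{y-1}$. Combining with the prefactor, $(x-1)^n v^n=(x-1)^n\left(\tfrac{1+(y-1)x}{x-1}\right)^n=(1+(y-1)x)^n$, which yields exactly
$$H^k_{\Phi}(x,y)=(1+(y-1)x)^nM^k_{\Phi}\left(\frac{y}{y-1},\frac{(y-1)x}{1+(y-1)x}\right).$$

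There is no real obstacle here: the only points requiring a moment of care are that the substitutions involved are invertible rational maps (so the composition is legitimate at the level of formal identities of polynomials, after clearing denominators), and that the $F=M$ identity is being invoked in its $k$-generalised form rather than only the $k=1$ case. Everything else is the routine algebra indicated above. For the write-up I would simply display equations \eqref{H=Feq} and \eqref{H=Meq} and state that substituting the latter into the former gives the claim, exactly as in the proof of \textbf{Corollary \ref{k=1}}.
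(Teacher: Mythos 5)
Your proposal is correct and matches the paper's proof: both substitute the $k$-generalised $F=M$ identity of Krattenthaler and Tzanaki into Theorem \ref{H=F} with $u=\tfrac{1}{x-1}$, $v=\tfrac{1+(y-1)x}{x-1}$, and the algebraic simplifications you carry out are exactly the ones the paper leaves implicit.
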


The coefficients of $F^k_{\Phi}(x,y)$ are known to be polynomials in $k$ \cite{krattenthaler06ftriangle}, so the coefficients of $H^k_{\Phi}(x,y)$ are also polynomials in $k$. Thus it makes sense to consider $H^k_{\Phi}(x,y)$ even if $k$ is not a positive integer. We can use \textbf{Corollary \ref{H=M}} to transfer a remarkable instance of combinatorial reciprocity observed by Krattenthaler \cite[Theorem 8]{krattenthaler06mtriangle} for the $M$-triangle to the $H$-triangle.

\begin{corollary}\label{recip}
 If $\Phi$ is a crystallographic root system of rank $n$, then
 $$H^k_{\Phi}(x,y)=(-1)^nH^{-k}_{\Phi}\left(1-x,\frac{-xy}{1-x}\right)\text{.}$$
 \begin{proof}
   We have
  \begin{align}\label{H=Meq2}
   H^k_{\Phi}(x,y)=(1+(y-1)x)^nM^k_{\Phi}\left(\frac{y}{y-1},\frac{(y-1)x}{1+(y-1)x}\right)\text{.}
  \end{align}
  But we also have \cite[Theorem 8]{krattenthaler06mtriangle} \cite[Theorem 1.2.]{tzanaki08faces}
  \begin{align}\label{Mrecip}
   M^k_{\Phi}(x,y)=y^nM^{-k}_{\Phi}\left(xy,\frac{1}{y}\right)\text{.}
  \end{align}
  Substituting (\ref{Mrecip}) into (\ref{H=Meq2}), we obtain 
  \begin{align}\label{H=M-}
   H^k_{\Phi}(x,y)=((y-1)x)^nM^{-k}_{\Phi}\left(\frac{xy}{1+(y-1)x},\frac{1+(y-1)x}{(y-1)x}\right)\text{.}
  \end{align}
  Inverting (\ref{H=Meq2}), we get
  \begin{align}\label{M=H}
   M^k_{\Phi}(x,y)=(1-y)^nH^k_{\Phi}\left(\frac{y(x-1)}{1-y},\frac{x}{x-1}\right)\text{.}
  \end{align}
  Substituting (\ref{M=H}) into (\ref{H=M-}), we obtain 
  $$H^k_{\Phi}(x,y)=(-1)^nH^{-k}_{\Phi}\left(1-x,\frac{-xy}{1-x}\right)\text{.}$$
 \end{proof}

\end{corollary}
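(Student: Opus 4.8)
The plan is to derive this reciprocity purely algebraically, by chaining together the $H$--$M$ correspondence of \textbf{Corollary \ref{H=M}} with Krattenthaler's reciprocity for the $M$-triangle. Since, as noted above, the coefficients of $H^k_\Phi(x,y)$ are polynomials in $k$, the symbol $H^{-k}_\Phi$ is meaningful, and it suffices to prove the identity as an equality of rational functions in $x$ and $y$ (with $k$ a formal parameter).

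First I would record the two inputs. From \textbf{Corollary \ref{H=M}},
$$H^k_{\Phi}(x,y)=(1+(y-1)x)^nM^k_{\Phi}\left(\frac{y}{y-1},\frac{(y-1)x}{1+(y-1)x}\right),$$
and from \cite[Theorem 8]{krattenthaler06mtriangle} together with \cite[Theorem 1.2.]{tzanaki08faces} (which supplies $M=F$ for $k>1$, so that Krattenthaler's reciprocity holds for all $k$),
$$M^k_{\Phi}(x,y)=y^nM^{-k}_{\Phi}\left(xy,\frac{1}{y}\right).$$
Substituting the second identity into the first, with first argument $\tfrac{y}{y-1}$ and second argument $\tfrac{(y-1)x}{1+(y-1)x}$, and simplifying the products of arguments, I would obtain
$$H^k_{\Phi}(x,y)=\bigl((y-1)x\bigr)^nM^{-k}_{\Phi}\left(\frac{xy}{1+(y-1)x},\frac{1+(y-1)x}{(y-1)x}\right).$$
Next I would invert \textbf{Corollary \ref{H=M}} once and for all: solving $u=\tfrac{y}{y-1}$, $v=\tfrac{(y-1)x}{1+(y-1)x}$ for $x,y$ gives $y=\tfrac{u}{u-1}$, $x=\tfrac{v(u-1)}{1-v}$, and $1+(y-1)x=\tfrac{1}{1-v}$, so that
$$M^k_{\Phi}(x,y)=(1-y)^nH^k_{\Phi}\left(\frac{y(x-1)}{1-y},\frac{x}{x-1}\right).$$
Applying this last identity with $k$ replaced by $-k$ to the $M^{-k}_\Phi$ appearing two displays above, the prefactor $(1-Y)^n=\bigl(-1/((y-1)x)\bigr)^n$ cancels the factor $((y-1)x)^n$, the arguments collapse to $1-x$ and $\tfrac{xy}{x-1}=\tfrac{-xy}{1-x}$, and the claimed identity follows.

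The whole argument is a routine sequence of rational-function substitutions; the genuine content lies entirely in the two black boxes, namely \textbf{Corollary \ref{H=M}} (hence ultimately \textbf{Theorem \ref{H=F}} and $M=F$) and Krattenthaler's $M$-triangle reciprocity. The only place demanding care is the bookkeeping of the nested substitutions and the behaviour of the various denominators, which is why I would carry out the inversion of \textbf{Corollary \ref{H=M}} explicitly as a standalone step and reuse it, rather than re-deriving the substitution each time it is needed.
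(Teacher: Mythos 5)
Your proposal is correct and follows essentially the same route as the paper: substitute Krattenthaler's $M$-triangle reciprocity into Corollary \ref{H=M}, invert Corollary \ref{H=M}, and chain the two substitutions, with all prefactors and arguments matching the paper's intermediate identities. The only difference is that you spell out the inversion step explicitly, which the paper leaves implicit.
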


For $k=1$, we can transfer a duality for the $F$-triangle to the $H$-triangle.

\begin{corollary}\label{dual}
 $$H^1_{\Phi}(x,y)=x^nH^1_{\Phi}\left(\frac{1}{x},1+(y-1)x\right)\text{.}$$
 \begin{proof}
  Inverting \textbf{Theorem \ref{H=F}}, we get
  \begin{align}\label{F=H}
   F^1_{\Phi}(x,y)=x^nH^1_{\Phi}\left(\frac{x+1}{x},\frac{y+1}{x+1}\right)\text{.}
  \end{align}
  Thus
  \begin{align*}
   H^1_{\Phi}(x,y)&=(1-x)^nF^1_{\Phi}\left(\frac{x}{1-x},\frac{xy}{1-x}\right)\\
   &=x^nH^1_{\Phi}\left(\frac{1}{x},1+(y-1)x\right)\text{,}
  \end{align*}
using \textbf{Corollary \ref{k=1}} and (\ref{F=H}).
 \end{proof}

\end{corollary}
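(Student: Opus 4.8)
The plan is to obtain \textbf{Corollary \ref{dual}} by composing \textbf{Corollary \ref{k=1}} with an inverted form of \textbf{Theorem \ref{H=F}}. First I would specialise \textbf{Theorem \ref{H=F}} to $k=1$ and solve for $F^1_{\Phi}$. Putting $u=\frac{1}{x-1}$ and $v=\frac{1+(y-1)x}{x-1}$, one checks that $x=\frac{u+1}{u}$, $y=\frac{v+1}{u+1}$ and $(x-1)^n=u^{-n}$, so that after renaming the variables one gets
\[
F^1_{\Phi}(x,y)=x^nH^1_{\Phi}\left(\frac{x+1}{x},\frac{y+1}{x+1}\right).
\]
This is the only place where \textbf{Theorem \ref{H=F}} enters, and since its change of variables is invertible this step is purely formal.

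Next I would take the identity of \textbf{Corollary \ref{k=1}}, namely $H^1_{\Phi}(x,y)=(1-x)^nF^1_{\Phi}\left(\frac{x}{1-x},\frac{xy}{1-x}\right)$, and substitute the inverted formula into its right-hand side. Writing $a=\frac{x}{1-x}$ and $b=\frac{xy}{1-x}$, the relevant simplifications are $a+1=\frac{1}{1-x}$, hence $\frac{a+1}{a}=\frac{1}{x}$ and $\frac{b+1}{a+1}=xy+1-x=1+(y-1)x$, while the scalar factor collapses as $(1-x)^na^n=(1-x)^n\left(\frac{x}{1-x}\right)^n=x^n$. Assembling these pieces produces exactly $H^1_{\Phi}(x,y)=x^nH^1_{\Phi}\left(\frac{1}{x},1+(y-1)x\right)$, as desired.

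I do not expect any real obstacle: the argument is a chain of substitutions of invertible rational changes of variables, and the only thing to watch is the bookkeeping, namely that the two transformations compose to the asserted one in both the arguments and the monomial prefactor. As a consistency check one may note that the target map $(x,y)\mapsto\left(\frac{1}{x},1+(y-1)x\right)$ is an involution, as it must be since the displayed identity expresses a symmetry of $H^1_{\Phi}$; alternatively one can view the whole corollary as transferring the $F$-triangle duality \cite[Proposition 5]{chapoton04enumerative} across \textbf{Theorem \ref{H=F}}.
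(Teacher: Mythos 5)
Your proposal is correct and follows exactly the paper's route: invert Theorem \ref{H=F} at $k=1$ to get $F^1_{\Phi}(x,y)=x^nH^1_{\Phi}\bigl(\frac{x+1}{x},\frac{y+1}{x+1}\bigr)$ and substitute this into Corollary \ref{k=1}, with the same variable bookkeeping. The only difference is that you spell out the algebraic simplifications explicitly, which the paper leaves to the reader.
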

For a bounded dominant region $R$ in the $k$-Catalan arrangement, let $CL_k(R)$ be the number of $k$-coloured ceilings of $R$.
Let $h_i^+(\Phi)$ be the number of bounded dominant regions $R$ that have exactly $n-i$ $k$-coloured ceilings. Let $h_i(\Delta_+^k(\Phi))$ be the $i$-th entry of the $h$-vector of the positive part of $\Delta^{(k)}(\Phi)$.
 
\begin{corollary}[\protect{\cite[Conjecture 1.2.]{athanasiadis06cluster}}]\label{pos}
If $\Phi$ is a crystallographic root system,
 $h_i^+(\Phi)=h_i(\Delta_+^k(\Phi))$
 for all $i$.
 \begin{proof}
  This follows from \textbf{Theorem \ref{Nar}} and \cite[Corollary 5.4.]{athanasiadis06cluster}.
 \end{proof}
\end{corollary}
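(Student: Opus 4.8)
The plan is to derive the corollary directly from Theorem \ref{Nar} together with the conditional reduction carried out by Athanasiadis and Tzanaki in \cite{athanasiadis06cluster}. The bridge between the two sides of the claimed equality is the bijection $\phi$ of \cite[Theorem 3.6.]{athanasiadis05refinement} from dominant regions $R$ of the $k$-Catalan arrangement of $\Phi$ to $NN^{(k)}(\Phi)$, under which $H_{\alpha,k}$ is a floor of $R$ precisely when $\alpha$ is a rank $k$ indecomposable element of $\phi(R)$. Thus Theorem \ref{Nar} says, equivalently, that the dominant regions of the $k$-Catalan arrangement are distributed according to the Fu\ss--Narayana numbers when counted by their number of $k$-coloured floors; and by \cite[Theorem 3.2.]{fomin05generalized} this same sequence is (up to reversal of indices) the $h$-vector of $\Delta^{(k)}(\Phi)$.

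First, I would recall the precise content of \cite[Corollary 5.4.]{athanasiadis06cluster}: it shows that \cite[Conjecture 1.2.]{athanasiadis06cluster}, that is the equality $h_i^+(\Phi)=h_i(\Delta_+^k(\Phi))$ for all $i$, is a formal consequence of the assertion that the number of $k$-generalised nonnesting partitions of $\Phi$ with exactly $i$ rank $k$ indecomposable elements equals $Nar^{(k)}(\Phi,i)$. Internally this reduction passes through the generating-function identity relating the $h$-vector of $\Delta^{(k)}(\Phi)$ to that of its positive part, matched (via $\phi$) against the analogous passage between the enumeration of all dominant regions by $k$-coloured floors and the enumeration of the bounded ones by $k$-coloured ceilings. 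Second, I would simply invoke Theorem \ref{Nar}, which supplies exactly the hypothesis of \cite[Corollary 5.4.]{athanasiadis06cluster}, and conclude.

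The only points demanding care are bookkeeping ones: one must check that ``rank $k$ indecomposable element'' in our sense agrees with the notion used in \cite{athanasiadis06cluster,athanasiadis05refinement}, and that the indexing conventions line up---ceilings versus floors, and the index $i$ versus $n-i$---so that the input required by \cite[Corollary 5.4.]{athanasiadis06cluster} is literally the statement of Theorem \ref{Nar}. There is no substantive obstacle remaining: the genuine work on the nonnesting side, namely the case-by-case Ehrhart-theoretic verification for the exceptional types, was already done in the proof of Theorem \ref{Nar}, while the structural passage from cluster complexes to bounded dominant regions was carried out in \cite{athanasiadis06cluster}. So this last corollary is, as its one-line proof indicates, purely a matter of assembling results already in place.
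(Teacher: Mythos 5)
Your proposal is correct and follows exactly the paper's route: the proof is the single observation that \cite[Corollary 5.4.]{athanasiadis06cluster} reduces the conjecture to the Fu{\ss}--Narayana enumeration of $NN^{(k)}(\Phi)$ by rank $k$ indecomposable elements, which is precisely Theorem \ref{Nar}. The additional remarks you make about the bijection $\phi$ and the indexing conventions are sensible bookkeeping but do not change the argument.
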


\begin{corollary}\label{ceil}
 If $\Phi$ is a crystallographic root system, then
 $$H^k_{\Phi}\left(x,1-\frac{1}{x}\right)=\sum_Rx^{|CL_k(R)|}\text{,}$$
 where the sum is over all bounded dominant regions $R$ in the $k$-Catalan arrangement of $\Phi$.
 \begin{proof}
  \begin{align*}
   \sum_Rx^{|CL_k(R)|}&=\sum_{i=0}^nh_i^+(\Phi)x^{n-i}\\
   &=\sum_{i=0}^nh_i(\Delta_+^k(\Phi))x^{n-i}\\
   &=\sum_{l=0}^nf_{l,0}^k(\Phi)(x-1)^{n-l}\\
   &=(x-1)^nF^k_{\Phi}\left(\frac{1}{x-1},0\right)\\
   &=H^k_{\Phi}\left(x,1-\frac{1}{x}\right)\text{,}
  \end{align*}
using \textbf{Corollary \ref{pos}} and \textbf{Theorem \ref{H=F}}.
  
 \end{proof}

\end{corollary}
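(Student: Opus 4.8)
The plan is to chain together a short sequence of known identities that rewrite the ceiling-generating function on the right-hand side as the $F$-triangle evaluated at $y=0$, and then invoke \textbf{Theorem \ref{H=F}} to turn that into a specialisation of the $H$-triangle. First I would regroup the sum on the right according to the ceiling statistic: with $R$ ranging over the bounded dominant regions of the $k$-Catalan arrangement and $h_i^+(\Phi)$ the number of such $R$ with exactly $n-i$ $k$-coloured ceilings, one has $\sum_R x^{CL_k(R)}=\sum_{i=0}^n h_i^+(\Phi)x^{n-i}$. Next I would replace $h_i^+(\Phi)$ by $h_i(\Delta_+^k(\Phi))$ using \textbf{Corollary \ref{pos}} (which in turn rests on \textbf{Theorem \ref{Nar}}).

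Then I would unfold the definition of the $h$-vector of the positive part of $\Delta^{(k)}(\Phi)$. Since the faces of the positive part are precisely the faces of $\Delta^{(k)}(\Phi)$ containing no negative simple root, i.e.\ those with $m=0$, their $f$-numbers are the $f_{l,0}^k(\Phi)$, so $\sum_{i=0}^n h_i(\Delta_+^k(\Phi))x^{n-i}=\sum_{l=0}^n f_{l,0}^k(\Phi)(x-1)^{n-l}$. Factoring out $(x-1)^n$ identifies the last sum as $(x-1)^n F^k_{\Phi}\!\left(\frac{1}{x-1},0\right)$, because putting $y=0$ into $F^k_{\Phi}(x,y)=\sum_{l,m}f_{l,m}^k(\Phi)x^ly^m$ annihilates every term with $m>0$. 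Finally I would choose $y$ in \textbf{Theorem \ref{H=F}} to make the second argument of $F^k_\Phi$ vanish: solving $\frac{1+(y-1)x}{x-1}=0$ gives $y=1-\frac{1}{x}$, so $(x-1)^n F^k_{\Phi}\!\left(\frac{1}{x-1},0\right)=H^k_{\Phi}\!\left(x,1-\frac{1}{x}\right)$, which closes the chain.

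I do not expect a genuine obstacle inside this corollary: all of its substance is imported, with the identification of the ceiling statistic and the positive-part $h$-vector coming from \textbf{Corollary \ref{pos}} (hence ultimately from \textbf{Theorem \ref{Nar}}) and the $F$-to-$H$ passage from \textbf{Theorem \ref{H=F}}. The only points needing care are bookkeeping: checking that the indexing convention "$n-i$ ceilings $\leftrightarrow h_i^+$" matches the one used in \textbf{Corollary \ref{pos}}, and verifying that the faces of the positive part really are the $m=0$ slice of $\Delta^{(k)}(\Phi)$ so that $F^k_\Phi(x,0)$ is indeed the generating function for $f_{l,0}^k(\Phi)$.
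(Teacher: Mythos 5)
Your proposal is correct and follows essentially the same chain of equalities as the paper: regroup by the ceiling statistic, apply Corollary \ref{pos} to replace $h_i^+(\Phi)$ by $h_i(\Delta_+^k(\Phi))$, unfold the positive-part $h$-vector into $\sum_l f_{l,0}^k(\Phi)(x-1)^{n-l}=(x-1)^nF^k_{\Phi}\left(\frac{1}{x-1},0\right)$, and specialise Theorem \ref{H=F} at $y=1-\frac{1}{x}$. No substantive difference from the paper's argument.
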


\begin{corollary}
 If $\Phi$ is a crystallographic root system of rank $n$, then
 $$\sum_Rx^{|CL_1(R)|}=x^nH^1_{\Phi}\left(\frac{1}{x},0\right)\text{,}$$
 where the sum is over all bounded dominant regions $R$ in the $1$-Catalan arrangement of $\Phi$.
 \begin{proof}
  \begin{align*}
   \sum_Rx^{|CL_1(R)|}&=H^1_{\Phi}\left(x,1-\frac{1}{x}\right)\\
   &=x^nH^1_{\Phi}\left(\frac{1}{x},0\right)\text{,}
  \end{align*}
  using \textbf{Corollary \ref{ceil}} and \textbf{Corollary \ref{dual}}.
  
 \end{proof}

\end{corollary}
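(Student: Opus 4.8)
The plan is to combine Corollary \ref{ceil}, specialised to $k=1$, with the self-duality of the $1$-Catalan $H$-triangle recorded in Corollary \ref{dual}. First I would invoke Corollary \ref{ceil} with $k=1$: it identifies the generating function $\sum_R x^{|CL_1(R)|}$, the sum being over bounded dominant regions $R$ of the $1$-Catalan arrangement weighted by their number of $1$-coloured ceilings, with the specialisation $H^1_{\Phi}\!\left(x,\,1-\frac{1}{x}\right)$ of the $H$-triangle.

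Next I would apply the duality $H^1_{\Phi}(x,y)=x^nH^1_{\Phi}\!\left(\frac{1}{x},\,1+(y-1)x\right)$ from Corollary \ref{dual}. The point is the elementary observation that substituting $y=1-\frac{1}{x}$ makes the second argument collapse: $(y-1)x=\left(-\frac{1}{x}\right)x=-1$, so $1+(y-1)x=0$. Hence $H^1_{\Phi}\!\left(x,\,1-\frac{1}{x}\right)=x^nH^1_{\Phi}\!\left(\frac{1}{x},0\right)$. Chaining this with the identity from the previous paragraph gives $\sum_R x^{|CL_1(R)|}=x^nH^1_{\Phi}\!\left(\frac{1}{x},0\right)$, as desired.

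There is no genuine obstacle here; the only thing to verify is the one-line computation that $y=1-\frac{1}{x}$ annihilates the second slot of the dual $H$-triangle. An alternative route, avoiding Corollary \ref{dual}, would start from the chain in the proof of Corollary \ref{ceil}, namely $\sum_R x^{|CL_1(R)|}=(x-1)^nF^1_{\Phi}\!\left(\frac{1}{x-1},0\right)$, and then apply the inverted form $F^1_{\Phi}(x,y)=x^nH^1_{\Phi}\!\left(\frac{x+1}{x},\frac{y+1}{x+1}\right)$ of equation (\ref{F=H}) with $x\mapsto\frac{1}{x-1}$ and $y=0$; a short simplification again yields $x^nH^1_{\Phi}\!\left(\frac{1}{x},0\right)$. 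Routing through Corollary \ref{dual} is the cleaner of the two, so that is the presentation I would adopt.
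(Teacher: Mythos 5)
Your argument is correct and is exactly the paper's proof: apply Corollary \ref{ceil} with $k=1$ and then Corollary \ref{dual} at $y=1-\frac{1}{x}$, where $1+(y-1)x=0$. The paper leaves that one-line substitution implicit, but the route is identical.
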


\bibliographystyle{alpha}
\bibliography{literature}

\end{document}